\documentclass[11pt, reqno]{amsart}

\usepackage{amsmath}
\usepackage{amssymb}
\usepackage{amsthm}
\usepackage{graphicx}
\usepackage{psfrag}
\usepackage{hyperref}

\allowdisplaybreaks

\textheight=235mm
\topmargin=-15mm
\oddsidemargin=0mm
\evensidemargin=0mm
\textwidth=160mm

\pagestyle{plain}

\newtheorem{thm}{Theorem}[section]
\newtheorem{lemma}[thm]{Lemma}
\newtheorem{prop}[thm]{Proposition}

\newtheorem{defn}[thm]{Definition}

\newtheorem{rmk}[thm]{Remark}

\newcommand{\mR}{\mathbb{R}}
\newcommand{\XX}{\mathcal{X}}

\newcommand{\dist}{\operatorname{dist}}

\def\R{{\mathbb R}}

\def\N{{\mathbb N}}

\def\B{{\mathcal B}}

\def\sm{\setminus}

\def\dist{\mbox{dist}}

\def\le{\leqslant}
\def\ge{\geqslant}

\newcommand{\E}{\mathcal{E}}

\newcommand{\beq}{\begin{equation}}
\newcommand{\eeq}{\end{equation}}
\def\n{\ensuremath{n}}

\def\E{\mathbb E}

\def\tDr{\tilde{\mathcal{D}}_r}

\title{Weak convergence to extremal processes and record events for 
non-uniformly hyperbolic dynamical systems}
\author[M. Holland]{Mark Holland}
\address{Mark Holland\\
Mathematics (CEMPS)\\ 
Harrison Building (327)\\
North Park Road\\ 
Exeter\\
EX4 4QF\\
UK}
\email{\href{mailto:M.P.Holland@exeter.ac.uk}{M.P.Holland@exeter.ac.uk}}
\urladdr{\url{http://empslocal.ex.ac.uk/people/staff/mph204/}}

\author[M. Todd]{Mike Todd}
\address{Mike Todd\\ Mathematical Institute\\
University of St Andrews\\
North Haugh\\
St Andrews\\
KY16 9SS\\
Scotland} \email{\href{mailto:m.todd@st-andrews.ac.uk}{m.todd@st-andrews.ac.uk}}
\urladdr{\url{http://www.mcs.st-and.ac.uk/~miket/}}

\date{\today}

\begin{document}

\maketitle

\begin{abstract}
For a measure preserving dynamical system $(\XX,f, \mu)$, we consider the time series of maxima 
$M_n=\max\{X_1,\ldots,X_n\}$ associated to the process $X_n=\phi(f^{n-1}(x))$ generated by the
dynamical system for some observable $\phi:\XX\to\mathbb{R}$. Using a point process approach we establish
weak convergence of the process $Y_n(t)=a_n(M_{[nt]}-b_n)$ to an extremal process $Y(t)$ for suitable scaling
constants $a_n,b_n\in\mathbb{R}$. Convergence here taking place in
the Skorokhod space $\mathbb{D}(0,\infty)$ with
the $J_1$ topology. We also establish distributional results for the record 
times and record values of the corresponding maxima process.
\end{abstract}

\section{Introduction}
Consider a measure preserving dynamical system $(\XX, f, \mu)$, where $\XX\subset\mathbb{R}^d$, 
$f:\XX\rightarrow \XX$ is a measurable transformation, and $\mu$ is an $f$-invariant probability measure 
supported on $\XX$. Given a measurable (observable) function
$\phi:\XX \rightarrow \mR$ we consider the stationary stochastic process
$X_1, X_2, \dots$ defined as
\begin{equation}\label{eq:phi_o_f} 
X_i =\phi \circ f^{i-1}, \quad i \geq 1,  
\end{equation}
and its associated maximum process $M_n$ defined as
\begin{equation}
\label{eq:max-process} 
M_n = \max(X_1,\dots,X_n). 
\end{equation}
Under appropriate assumptions on the system $(\XX, f, \mu)$, there exist scaling constants
$a_n>0$ and $b_n\in\mathbb{R}$ and a non-degenerate limit function $G(u)$ for which
\begin{equation}\label{eq.Mn-limit}
\lim_{n\to\infty}\mu\{x\in\XX:\, a_n(M_n(x)-b_n)\leq u\}=G(u). 
\end{equation}
Beyond the distributional limit established in \eqref{eq.Mn-limit}, here
we consider the continuous time 
process $\{Y_n(t):t\geq 0\}$ defined by
\begin{equation}\label{eq.Y-process}
Y_n(t)=
\begin{cases}a_n(M_{[nt]}-b_n) & t\geq 1/n;\\
a_n(X_1-b_n) & 0<t<1/n.
\end{cases} 
\end{equation}
For each $n\geq 1$, $Y_n(t)$ is a random graph with values in the Skorokhod space $\mathbb{D}(0,\infty)$. Under suitable
hypotheses on $(\XX, f, \mu)$ we prove existence of a non-degenerate limit process $Y(t)$ so that $Y_n(t)\overset{{\small d}}{\to}Y(t)$ in $\mathbb{D}(0,\infty)$ with respect to the Skorokhod $J_1$ metric. Here, $\overset{{\small d}}{\to}$ denotes weak convergence (or convergence in distribution). 
The limit process $Y(t)$ will be the so called \emph{extremal process} which we now define.

\subsection{Extremal processes and weak convergence.}
Consider a general probability space $(\Omega,\mathcal{B},\mu)$, where $\mathcal{B}$
is the $\sigma$-algebra of sets in the sample space $\Omega$. If $X:\Omega\to\mathbb{R}$ is a random variable, we let
$F(u):=\mu\{X\leq u\}$, and define finite dimensional distributions:
\begin{equation}
F_{t_1,\ldots,t_k}\left(u_1,\ldots,u_k\right)=
F^{t_1}\left(\bigwedge_{i=1}^{k}\{u_i\}\right)
F^{t_2-t_1}\left(\bigwedge_{i=2}^{k}\{u_i\}\right)\cdots 
F^{t_k-t_{k-1}}(u_k),
\end{equation} 
with $t_1<t_2<\cdots<t_k$, and $\wedge$ denoting the minimum operation. Suppose that
$Y_{F}(t)$ is a stochastic process with these finite dimensional distributions, i.e.
\begin{equation}
\mu\{Y_F(t_1)\leq u_1,\ldots Y_F(t_k)\leq u_k\}=F_{t_1,\ldots,t_k}(u_1,\ldots,u_k).
\end{equation}
By the Kolmogorov extension theorem such a process exists and is called an \emph{extremal-$F$ process}. A version
can be taken in $\mathbb{D}(0,\infty)$, i.e. continuous to the right with left hand limits. 
It turns out that $Y_F(t)$ is a Markov jump process, see \cite{Embrechts, Res_weak, Resnick3}, and further properties include:
\begin{itemize}
\item For all $t,s>0$, $\mu\{Y_F(t+s)\leq x\mid Y_F(s)=y\}= F^t(x)\cdot 1_{\{x\geq y\}}$.
\item For all $t,s>0$, $\mu\{Y_F(t+s)=Y_F(t)\mid Y_F(s)=y\}= F^t(y)$. Setting $Q(y)=-\log F(y)$ implies that the holding
time in state $y$ is given by an exponential distribution with parameter $Q(y)$.
\item If $\{t_i\}$ denotes the sequence of jump times (or points of discontinuity) for $Y_F(t)$ then
\begin{equation}
\mu\{Y_F(t_{i+1})\leq x\mid Y_F(t_i)=y\}=
\begin{cases}
1-Q(x)/Q(y) & x>y;\\
0 & \text{if $x<y$}.
\end{cases}
\end{equation}
\end{itemize}

Our main result is to show that for certain chaotic dynamical systems the process $Y_n(t)$ in \eqref{eq.Y-process} converges (weakly) to an extremal-$G$ process $Y_G(t)$. This is the first time extremal processes have been used in the dynamical systems context.  The mode of convergence to the extremal process is in distribution on $\mathbb{D}(0,\infty)$ with respect to the Skorokhod $J_1$-topology. To be precise, let $\mu_n=\mu\circ Y^{-1}_{n}(t)$,
so that for all measurable $A\subset\mathbb{R}$, $Y^{-1}_n(t)(A)=\{\omega\in\Omega:Y_n(t)(\omega)\in A\}$.
Then $Y_n(t)\overset{{\small d}}{\to}Y(t)$ in $\mathbb{D}(0,\infty)$ if for all bounded
continuous functions $\varphi$
on $\mathbb{D}(0,\infty)$,
$$\int_{(0,\infty)}\varphi(x)d\mu_n\to\int_{(0,\infty)}\varphi(x)d\mu.$$
The space $\mathbb{D}(0,\infty)$ consists of right continuous functions, with existence of limits to the left
(cadlag functions) \cite{Skorokhod}. To overview the construction of the Skorokhod $J_1$-topology consider first the space $\mathbb{D}[a,b]$, and let
$\|\cdot\|$ denote the uniform norm on $[a,b]$, so that $\|\varphi\|=\sup_{a\leq t\leq b}|\varphi(t)|$. Then
a metric on $\mathbb{D}[a,b]$ is given by:
$$d_{a,b}(\varphi_1,\varphi_2):=\inf_{h\in\Lambda}\left\{\|\varphi_1\circ h-\varphi_2\|\vee\|h-\mathrm{id}\|\right\},$$
where $\Lambda$ is the set of strictly increasing, continuous functions
$h:[a,b]\to[a,b]$ such that $h(a)=a$ and $h(b)=b$. The function $\mathrm{id}$ is the identity mapping.
This metric is not complete, 
but an equivalent (complete) metric can be constructed by a homeomorphism,
see \cite{Resnick3, Skorokhod}. 

The construction carries over to $\mathbb{D}(0,\infty)$ by use of the following metric: let $r_{a,b}\varphi(x)$ denote
the restriction of $\varphi(x)$ to the interval $[a,b]$, and define 
$$d_{0,\infty}(\varphi_1,\varphi_2):=\int_{0}^{1}\int_{1}^{\infty}e^{-t}(1\wedge d_{s,t}(r_{s,t}\varphi_1,
r_{s,t}\varphi_2))\,dt\,ds.$$
Then convergence $\varphi_n\to\varphi$ in $\mathbb{D}(0,\infty)$ holds in the $J_1$ metric if 
$d_{0,\infty}(\varphi_n,\varphi)\to 0$ at each continuity point of $\varphi$. We remark that similar metrics can be constructed on $\mathbb{D}(-\infty,0)$ and $\mathbb{D}(-\infty,\infty)$.

\section{Main results: weak convergence to an extremal process.}
As noted above, the heart of this paper is to prove that the process $Y_n$ converges to a certain extremal process.  In this section we give the main results in this direction after defining the relevant short-term and long-term mixing conditions that guarantee this convergence.  In the next section these results will then be interpreted in terms of record times and record values via the Continuous Mapping Theorem.

We consider $(\XX, f, \mu)$ a measure preserving system, 
and assume that $\mu$ is absolutely continuous with respect to Lebesgue $m$ 
with density $\rho$ (note that this condition could be removed in line with \cite{FFT_rough}). Within this article, unless otherwise stated we
consider observable functions of the form
$\phi(x)=\psi(\dist(x,\tilde{x}))$. Here $\tilde{x}$ is a chosen
point in $\XX$, and $\psi:[0,\infty)\to\mathbb{R}$ is a measurable
function with $\sup_{v\in[0,\infty)}\psi(v)=\psi(0)$.

\subsection{Probabilistic mixing and recurrence conditions}
Let $\mathcal{S}$ denote the semi-ring of subsets whose elements are intervals of type $[a,b)$
for $a,b\in\mathbb{R}^+$, and let $\mathfrak{R}$ denote the ring generated by $\mathcal{S}$.
So for every $A\in\mathfrak{R}$, there exists $k\in\mathbb{N}$, and intervals $I_1,\ldots, I_k\in\mathcal{S}$
such that $A=\cup_j I_j$. To fix notation, for any $\alpha\in\mathbb{R}$ and $I=[a,b)\in\mathcal{S}$
we have $\alpha I=[\alpha a,\alpha b)$, and $I+\alpha=[a+\alpha,b+\alpha)$. This notation extends 
in a natural way to any $A\in\mathfrak{R}$. For $A\in\mathfrak{R}$, we let
$$M(A)=\max\{X_i,i\in A\cap\mathbb{Z}\}.$$
In the case $A=[0,n)$, we have $M(A)=M_n$. 

Given this setup, the probabilistic condition we define first concerns that of mixing and asymptotic independence of maxima in different blocks. We make this precise as follows. Given $0<x_1\leq x_2\leq\cdots\leq x_r$, suppose that $u^{(1)}_{n}\geq u^{(2)}_{n}\geq\cdots\geq u^{(r)}_{n}$
are such that 
\begin{equation}\label{eq.u-kn}
n\mu\{X_1>u_{n}^{(k)}\}\to x_k,\quad\forall k\leq r.
\end{equation}
\begin{defn}
We say that condition ${\mathcal{D}}_r(u_n^{(k)})$ holds, if for any disjoint collection of sets 
$\mathcal{A}_1,\ldots, \mathcal{A}_r\in \mathfrak{R}:$
\begin{multline*}
\left|\mu\left(\{X_1>u_{n,0}\}\cap\{M(\mathcal{A}_1+t)\leq u_{n,1},\ldots, 
M(\mathcal{A}_r+t)\leq u_{n,r}\}\right)\right.\\
\left. -\mu\left(\{X_1>u_{n,0}\}\right)\mu\left(\{M(\mathcal{A}_1)\leq u_{n,1},\ldots, 
M(\mathcal{A}_r)\leq u_{n,r}\}\right)\right|
\leq\gamma(n,t),
\end{multline*}
where $n\gamma(n,t)\to 0$ for some integer sequence $t_n=o(n)$, and for each $i$, 
$u_{n,i}$ denotes any one of the 
$u^{(k)}_{n}$, $(1\leq k\leq r)$.
\end{defn}
In the next definition, we consider the frequency of exceedances of the $X_j$
(in a probabilistic sense) over a threshold sequence $\{u_n\}$.
\begin{defn}
We say that condition $\mathcal{D}_r'(u^{(k)}_n)$ holds for a sequence $\{u^{(k)}_n\}$ if
\begin{equation}
\lim_{k\to\infty}\limsup_{n\to\infty} n\sum_{j=2}^{n/k}\mu\left(\left\{X_1>u^{(k)}_n,X_j>u^{(k)}_n\right\}\right)=0.
\end{equation} 
\end{defn}
In the case where it is known that 
\eqref{eq.Mn-limit} holds for 
given sequences $\{a_n\}, \{b_n\}$ and non-degenerate distribution function $G(u)$ 
we can usually take 
\begin{equation}\label{eq.un_rep}
u^{(k)}_n( x_k)=a_n^{-1}G^{-1}(e^{ -x_k})+b_n
\end{equation}
in the 
above definitions. This is certainly true in the i.i.d case, and
for a wide class of dynamical systems. In fact when this scaling rule applies it is more natural
to re-write \eqref{eq.u-kn} and assume the representation
\begin{equation}\label{eq.G_rep}
n\mu\{X_1>u/a_n+b_n\}\to-\log G(u),
\end{equation}
i.e. to replace $x_k$ by $-\log G(u)$ for some $u\in\mathbb{R}$. The limit relation \eqref{eq.Mn-limit} becomes
a consequence of \eqref{eq.G_rep} (rather than the converse) and is known as the \emph{Poisson approximation}, 
see \cite{Embrechts}.

We briefly compare our conditions with previous ones.  In the classical probability literature there were conditions 
$D$ and $D'$, see \cite{Leadbetter}, but Freitas and Freitas \cite{FF, FFT1}, inspired by Collet \cite{Collet} converted 
these to conditions more straightforwardly checkable in a dynamical context. In conjunction with \eqref{eq.G_rep} these conditions
are then used to imply the limit law in \eqref{eq.Mn-limit}.  Condition $\mathcal{D}_r(u^{(k)}_n)$ is very similar to $D_3(u_n)$ used in \cite[Section 4]{FFT1} (itself similar to $D(u_n)$ in that paper): the only difference being the multiple thresholds $u_n^{(k)}$ for each given $n$ leading to a slightly stronger condition on the mixing.  Also $\mathcal{D}_r'(u^{(k)}_n)$ is nearly identical to $D'(u_n)$ in \cite{FFT1}, but in this case the different thresholds do not add any real strength to the condition since they are each checked independently.  The similarities between the conditions here and in \cite{FFT1} are strong enough that by examining the proofs (most importantly of the $D$ condition), any dynamical system to which the old conditions have been shown to apply can quite easily be seen to satisfy our new conditions.

\begin{thm}\label{thm.Y-process}
Suppose that $(\XX, f, \mu)$ is a measure preserving dynamical system, 
and suppose that there exist sequences $\{a_n\}, \{b_n\}$ and a non-degenerate
function $G(u)$ such that \eqref{eq.G_rep} holds.  
Suppose that for any $r\geq 1$, and any sequence $\{ x_i,i\leq r\}$, condition 
$\mathcal{D}_r(u^{(k)}_n)$ holds together with $\mathcal{D}_r'(u^{(k)}_n)$. 
Then the process $Y_n(t)=a_n(M_{[nt]}-b_n)$ converges weakly to an 
extremal-$G$ process $Y_G(t)$ in $\mathbb{D}(0,\infty)$ (endowed with 
the Skorokhod $J_1$ topology). 
\end{thm}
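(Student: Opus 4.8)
The plan is to prove the convergence via the point process approach indicated in the abstract. For each $n$ define the marked point process on $(0,\infty)\times\mathbb{R}$ by
$$N_n=\sum_{i\ge1}\delta_{(i/n,\,a_n(X_i-b_n))},$$
and let $N$ be the Poisson point process on $(0,\infty)\times\mathbb{R}$ with intensity $dt\times d\nu$, where $\nu\big((u,\infty)\big)=-\log G(u)$. The running-maximum functional $\Phi$, sending a point measure to the cadlag path $t\mapsto\sup\{y:(s,y)\in\,\cdot\,,\ s\le t\}$, satisfies $\Phi(N_n)=Y_n$ by construction, while $\Phi(N)$ is a version of the extremal-$G$ process $Y_G$. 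Thus it suffices to prove $N_n\Rightarrow N$ on the space of point measures (vague topology) and that $\Phi$ is continuous on a set of full $N$-measure; the Continuous Mapping Theorem then yields $Y_n\overset{d}{\to}Y_G$ in $\mathbb{D}(0,\infty)$ with the $J_1$ topology.

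For the point process convergence I would invoke Kallenberg's criterion for convergence to a simple Poisson limit: since $N$ is simple with atomless mean measure, it is enough to check, for every finite disjoint union $R=\bigcup_{j=1}^r I_j\times(u_j,\infty)$ with $I_j\in\mathcal{S}$, the two limits $\mathbb{E}\,N_n(R)\to\mathbb{E}\,N(R)$ and $\mu\{N_n(R)=0\}\to\mu\{N(R)=0\}$. The mean-measure convergence is immediate from stationarity and the Poisson-approximation hypothesis \eqref{eq.G_rep}, since $\mathbb{E}\,N_n(I\times(u,\infty))=\#(nI\cap\mathbb{Z})\cdot\mu\{X_1>u/a_n+b_n\}\to|I|\cdot(-\log G(u))$. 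The void probabilities are precisely block-maxima events,
$$\mu\{N_n(R)=0\}=\mu\Big\{M(nI_j)\le u_j/a_n+b_n,\ j=1,\dots,r\Big\},$$
so everything reduces to the asymptotics of joint block maxima at the multiple thresholds $v_{n,j}=u_j/a_n+b_n$.

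This is where the two mixing conditions enter, and it is the crux of the argument. First, writing the $I_j$ as disjoint elements of $\mathfrak{R}$ and inserting gap-blocks of length $t_n=o(n)$ between them, condition $\mathcal{D}_r(u^{(k)}_n)$ lets me factorise the joint void probability into the product of the individual block void probabilities, up to an error that is $o(1)$ after the $n$-fold iteration built into the definition; here the multiple-threshold formulation of $\mathcal{D}_r$ is exactly what is needed, since the blocks carry different levels. Second, within a single time-block of length $\sim n\ell$ at threshold $v_n$ with $n\mu\{X_1>v_n\}\to\tau=-\log G(u)$, the anti-clustering condition $\mathcal{D}_r'(u^{(k)}_n)$ rules out multiple exceedances accumulating in short runs, so the classical Leadbetter blocking estimate gives $\mu\{M(\text{block})\le v_n\}\to e^{-\ell\tau}=G^{\ell}(u)$. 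Carrying out this computation for the nested maxima, and noting that the block $[nt_{j-1},nt_j)$ is constrained by the smallest of the relevant thresholds, hence by $\bigwedge_{i\ge j}u_i$, reproduces exactly the finite-dimensional distributions
$$\mu\{Y_n(t_1)\le u_1,\dots,Y_n(t_k)\le u_k\}\longrightarrow\prod_{j=1}^{k}G^{\,t_j-t_{j-1}}\Big(\bigwedge_{i\ge j}u_i\Big)=F_{t_1,\dots,t_k}(u_1,\dots,u_k),$$
with $F=G$ and $t_0=0$, matching the extremal-$G$ law defined above.

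Finally, $\Phi$ is continuous at every configuration whose time-coordinates are distinct and which has no mass escaping the boundary on compacts; since the limiting Poisson process $N$ has atomless intensity in $t$, it charges this continuity set with probability one, so the Continuous Mapping Theorem applies and delivers the asserted weak convergence in $\mathbb{D}(0,\infty)$. I expect the main obstacle to be the blocking step under $\mathcal{D}_r$: one must control the accumulation of the factorisation error over the $\sim n/t_n$ blocks \emph{simultaneously} at all $r$ thresholds, and verify that the gap-blocks together with the boundary effects at the endpoints $nt_j$ contribute negligibly. The bookkeeping for the several levels $\bigwedge_{i\ge j}u_i$ is the part demanding the most care, whereas the within-block Poissonisation via $\mathcal{D}_r'$ is a routine adaptation of the standard Leadbetter argument.
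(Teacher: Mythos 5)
Your proposal is correct, and its skeleton coincides with the paper's: the same planar point process $\xi_n=\sum_i\delta_{(i/n,\,a_n(X_i-b_n))}$ (your $N_n$ is the paper's $\xi_n$ from Theorem \ref{thm.plane}), the same Poisson limit with intensity $\mathrm{Leb}\times\lambda_G$, and the same conclusion via the running-maximum functional (the paper's $H_1$, your $\Phi$), whose a.s.\ continuity is quoted from Resnick and combined with the Continuous Mapping Theorem. Where you genuinely diverge is in how the Kallenberg void-probability check is organised. The paper tests on \emph{bounded} rectangles $(a,b]\times[c,d)$, so its void events become ``equal exceedance counts at consecutive thresholds'' as in \eqref{eq.noentry}; this forces a two-stage argument, first proving joint convergence of the multi-level exceedance processes via the thinning construction on horizontal lines (Proposition \ref{prop.lines}) and then evaluating the limit by an explicit combinatorial computation with successively thinned Poisson processes. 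You instead test on semi-infinite strips $I\times(u,\infty)$, for which $\{N_n(R)=0\}$ is \emph{directly} a joint block-maxima event at multiple thresholds, i.e.\ precisely the content of Proposition \ref{prop.dr_un}; this bypasses the thinning machinery and the binomial-sum identity entirely, and is closer to the classical Resnick treatment. The trade-off: with strips you must work in the vertically compactified state space (e.g.\ $(0,\infty)\times(x_\ell,\infty]$ where $x_\ell$ is the left endpoint of $G$) so that strips are relatively compact and finite unions of them form an admissible class for Kallenberg's criterion --- with bounded rectangles, as in the paper, no compactification is needed --- and for overlapping time-intervals you must rearrange into disjoint intervals carrying the thresholds $\bigwedge_{i}u_i$, exactly as the paper also does. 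The crux is identical in both routes and you identify it correctly: your ``factorisation plus within-block Poissonisation'' paragraph is precisely the multi-threshold asymptotic independence of block maxima that the paper isolates as Proposition \ref{prop.dr_un} and proves in Section \ref{sec.proof.prop.dr_un} by the Leadbetter-style blocking with gaps $t_n=o(n)$, using $\mathcal{D}_r(u_n^{(k)})$ for the long-range factorisation errors and $\mathcal{D}_r'(u_n^{(k)})$ for anti-clustering, with the accumulated error $\sum_j S_j\Upsilon_{k,n}(u_j)$ controlled exactly as you anticipate. Two small points of care: your thresholds $v_{n,j}=u_j/a_n+b_n$ are admissible since \eqref{eq.G_rep} gives $n\mu\{X_1>v_{n,j}\}\to-\log G(u_j)$ and the hypotheses assume the conditions for \emph{all} finite sequences $\{x_i\}$ (this is also what lets one pass from $(0,1]$ to $(0,\infty)$, cf.\ the remark after Proposition \ref{prop.lines}); and $\Phi(N_n)$ differs from $Y_n$ on $(0,1/n)$, where the supremum is over the empty set, but this discrepancy is harmless in the metric $d_{0,\infty}$.
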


We make several remarks about Theorem \ref{thm.Y-process}. 
First note that the hypotheses on $(\XX, f, \mu)$ as stated are quite weak.
However to check conditions $\mathcal{D}_r(u^{(k)}_n)$ and $\mathcal{D}_r'(u^{(k)}_n)$
in specific applications we usually require absolute continuity of $\mu$ and estimates on 
the recurrence time statistics (e.g. ergodicity and/or decay of correlation estimate). 
We discuss this in Section \ref{sec.applications}. A further remark is that conditions 
$\mathcal{D}_r(u^{(k)}_n)$ and $\mathcal{D}_r'(u^{(k)}_n)$ are sufficient conditions to ensure
convergence to $Y_G(t)$. In the i.i.d case, existence of the limit \eqref{eq.Mn-limit} alone is enough
to ensure convergence, see \cite{Resnick3}. Thus in certain situations we might expect the conclusion of 
Theorem \ref{thm.Y-process} to hold under weaker hypotheses. We discuss this further in Section \ref{sec.discussion}.

Equation \eqref{eq.G_rep} in itself imposes regularity conditions
on the invariant density of $\mu$ and on the form of the observable 
$\phi(x)=\psi(\dist(x,\tilde{x}))$. For linear scaling sequences $\{a_n\}, \{b_n\}$ there turn out to be three non-degenerate 
types for the distribution function $G$, namely Gumbel, Fr\'echet and Weibull \cite{Embrechts}. We make the following remark detailing how these types can arise, and for the precise computations see e.g. \cite{HNT}.
\begin{rmk}
Suppose that $(\XX, f, \mu)$ is a measure preserving system, for given $\tilde{x}\in\XX$ the invariant density $\rho(\tilde{x})$ lies in $(0, \infty)$ and for any $r\geq 1$, and any sequence $\{ x_i,i\leq r\}$ 
condition $\mathcal{D}_r(u^{(k)}_n)$ holds together with $\mathcal{D}_r'(u^{(k)}_n)$. We have the following cases. 
\begin{itemize}
\item[(i)] If $X_n=-\log(\dist(f^{n-1}(x),\tilde{x}))$, then $Y_n(t)=M_{[nt]}-\log n$ converges weakly to the extremal-$G$ 
process $Y_G(t)$ in $\mathbb{D}(0,\infty)$. In this case 
$G(u)=e^{-2\rho(\tilde{x})e^{-u}}$.
\item[(ii)] If $X_n=\dist(f^{n-1}(x),\tilde{x})^{-\alpha}$ for $\alpha>0$, then $Y_n(t)=n^{-\alpha}M_{[nt]}$ converges weakly to the extremal-$G$ 
process $Y_G(t)$ in $\mathbb{D}(0,\infty)$. In this case 
$G(u)=e^{-2\rho(\tilde{x})u^{-\alpha}}$.
\item[(iii)] 
If $X_n=C-\dist(f^{n-1}x,\tilde{x})^{\alpha}$ for $\alpha>0$, then the process $Y_n(t)=n^{\alpha}(M_{[nt]}-C)$ 
converges weakly to the extremal-$G$ process $Y_G(t)$ 
in $\mathbb{D}(0,\infty)$. In this case $G(u)=e^{-2\rho(\tilde{x})(-u)^{\alpha}}$.
\end{itemize}
In each case, $\mathbb{D}(0,\infty)$ is endowed with the Skorokhod $J_1$ topology.
\end{rmk}
Given the extremal process $Y_G(t)$ its (path) inverse is defined by:
$$Y^{\leftarrow}_G(t)=\inf\{x:Y(x)>t\},$$
where the domain of $Y^{\leftarrow}_G$ is the left and right end-points of $G$.
Given $Y_n(t)=a_n(M_{[nt]}-b_n)$,  denote the inverse path process by 
$Y^{\leftarrow}_n(t)$. We have the following result.
\begin{thm}\label{thm.Y-inv.process}
Suppose that $(\XX, f, \mu)$ is a measure preserving dynamical system,
and suppose that there exist sequences $\{a_n\}, \{b_n\}$ and a non-degenerate
function $G(u)$ such that \eqref{eq.G_rep} holds.
Suppose that for any $r\geq 1$, and any sequence $\{ x_i,i\leq r\}$ condition 
$\mathcal{D}_r(u^{(k)}_n)$ holds together with $\mathcal{D}_r'(u^{(k)}_n)$.  Then 
the process $Y^{\leftarrow}_n(x)$ converges weakly to the inverse extremal-$G$ 
process $Y^{\leftarrow}_G(t)$ in $\mathbb{D}(E)$, (endowed with 
the Skorokhod $J_1$ topology), where
$E\subset\mathbb{R}$ is the domain of definition of $G$.
\end{thm}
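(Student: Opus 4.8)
The plan is to deduce the inverse-path convergence from the already-established forward convergence in Theorem \ref{thm.Y-process} by invoking the Continuous Mapping Theorem. By Theorem \ref{thm.Y-process} we know $Y_n(t) \overset{d}{\to} Y_G(t)$ in $\mathbb{D}(0,\infty)$ with the $J_1$ topology. The path-inverse operation sends a nondecreasing cadlag function $\varphi$ to $\varphi^{\leftarrow}(t)=\inf\{x:\varphi(x)>t\}$, and the key observation is that $M_{[nt]}$ is monotone nondecreasing in $t$, so each realisation $Y_n(\cdot)$ lies in the subspace of nondecreasing cadlag paths; the same is true of the limit $Y_G$, which is a Markov jump process with upward jumps only. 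Thus both $Y_n$ and $Y_G$ take values in the space $\mathbb{D}^{\uparrow}$ of nondecreasing cadlag functions, and it is on this restricted domain that the inversion map is well behaved.

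The first step is to verify that the inversion map $T:\varphi\mapsto\varphi^{\leftarrow}$ is continuous (in the $J_1$ topology) at every path that is strictly increasing at its jumps, or more precisely on a set of $Y_G$-full measure. The standard fact here is that, restricted to $\mathbb{D}^{\uparrow}$, the inversion map is an $J_1$-continuous bijection onto $\mathbb{D}^{\uparrow}$ (interchanging the roles of domain and range), with discontinuities confined to paths having flat stretches at levels that coincide with a jump location of the relevant limit; since $Y_G$ almost surely has only countably many jumps and, between them, is locally constant with jump levels that are a.s. distinct, the limit process $Y_G$ lands a.s. in the continuity set of $T$. I would record this as a short lemma, citing the inversion/continuity results in \cite{Resnick3, Skorokhod} for monotone cadlag paths, and checking the measure-zero exceptional set directly from the finite-dimensional distributions $F_{t_1,\ldots,t_k}$ given in the definition of the extremal-$G$ process.

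With continuity of $T$ on a set of full $Y_G$-measure in hand, the Continuous Mapping Theorem gives $T(Y_n)\overset{d}{\to}T(Y_G)$, i.e. $Y_n^{\leftarrow}\overset{d}{\to}Y_G^{\leftarrow}$, in the appropriate Skorokhod space. The final bookkeeping step is to identify the domain of the limit: since inversion swaps the time axis and the value axis, the resulting paths live on $\mathbb{D}(E)$ where $E\subset\mathbb{R}$ is the range of $Y_G$, namely the interval on which $G$ is supported, matching the statement. The main obstacle I anticipate is the rigorous verification that $T$ is $J_1$-continuous precisely on a $Y_G$-full-measure set: one must rule out the pathological paths with horizontal plateaus at the "wrong" heights, and this requires using the explicit jump structure of the extremal process (the holding-time and jump-distribution formulas from the bulleted properties) to show such plateaus occur with probability zero. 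Everything else is a routine application of the Continuous Mapping Theorem once this continuity point is secured.
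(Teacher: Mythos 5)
Your argument hinges on the claim that the path--inversion map $T(\varphi)=\varphi^{\leftarrow}$ is $J_1$-continuous at $Y_G$-almost every realisation, so that the Continuous Mapping Theorem can be applied to the conclusion of Theorem \ref{thm.Y-process}. That claim is false, and this is precisely the trap the paper flags at the end of its own proof: ``it is tempting to apply $H_1$ and then a mapping $\tilde{H}$ with $\tilde{H}(y)=y^{\leftarrow}$\dots\ However this latter map is not continuous on $\mathbb{D}(0,\infty)$.'' The discontinuity set of $T$ is not a collection of pathological plateaus at special heights: it contains \emph{every} nondecreasing step function possessing a plateau followed by a jump, hence $Y_G$-a.e.\ path, since an extremal process is a.s.\ a pure jump process, constant between isolated jumps. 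Concretely, take $\varphi=0$ on $[0,1)$, $\varphi=1$ on $[1,3)$, $\varphi=2$ on $[3,\infty)$, and let $\varphi_n$ agree with $\varphi$ except that $\varphi_n=1+\delta_n$ on $[2,3)$, with $\delta_n\downarrow 0$. Then $\sup_t|\varphi_n(t)-\varphi(t)|=\delta_n\to 0$, so $\varphi_n\to\varphi$ uniformly, hence in $J_1$; but $\varphi^{\leftarrow}$ has a single jump of size $2$ at $t=1$, whereas $\varphi_n^{\leftarrow}$ has two jumps of size $1$, at $t=1$ and $t=1+\delta_n$. Two jumps merging into one is exactly what the $J_1$ topology forbids in a limit, so $\varphi_n^{\leftarrow}\not\to\varphi^{\leftarrow}$ in $J_1$ (inversion at such paths is continuous only into the weaker $M_1$ topology; $J_1$-continuity holds essentially only at strictly increasing paths). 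Thus $\mu(Y_G\in D_T)=1$ rather than $0$, and the CMT cannot be invoked. Note also that the offending approximants are typical of the prelimit: two successive records of $\{X_n\}$ with close values produce in $Y_n^{\leftarrow}$ exactly two order-one jumps at nearby times, so the obstruction cannot be argued away by support considerations either.

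The theorem is nevertheless true, and the paper's proof shows where the correct continuity lives: one level down, at the point process. Rather than composing through path space, the paper applies the CMT once, directly to the planar point process $\xi_n=\sum_i\delta_{(i/n,\,a_n(X_i-b_n))}$ of Theorem \ref{thm.plane}, using the map $H_2(\xi)(t)=\inf\{t_i: y_i>t\}$ on $M_p((0,\infty)\times\mathbb{R})$. This map \emph{is} a.s.\ continuous with respect to the limiting PRM \cite[Chapter 4]{Resnick3}, because vague convergence of point measures controls the time and height coordinates of the points symmetrically, so no jump-merging pathology arises; one then checks that $H_2(\xi)$ has the finite-dimensional distributions of $Y_G^{\leftarrow}$. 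The structural lesson is that the planar point process retains the individual exceedances, while the running-maximum path $Y_n$ discards information that inversion cannot recover continuously in $J_1$. To salvage your outline you would have to either pass to this point-process route, or weaken the topology on the image side from $J_1$ to $M_1$ — which would prove a strictly weaker statement than the theorem as stated.
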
 
For example, in Theorem \ref{thm.Y-inv.process} we take $E=(-\infty,\infty)$
in the case of the Gumbel distribution. For the Fr\'echet and Weibull distributions
we take $E=(0,\infty)$ and $E=(-\infty,0)$ respectively.

\section{The distribution of record times and record values}
Given the processes $Y_G(t)$ and $Y_{G}^{\leftarrow}(t)$, we describe
next the distribution of their jump values, i.e. the locations of
their discontinuities. This has natural application to the theory of record times and
record values which we describe as follows. Consider the original processes $\{X_n\}$, $\{M_n\}$, and 
let $\tau_1=1$. Define a strictly increasing sequence $\{\tau_k\}$ via:  
\begin{equation}\label{eq.time-to-record}
\tau_k=\inf\{j>\tau_{k-1}:X_j>M_{j-1}\}.
\end{equation}
Then this sequence $\{\tau_k\}$ forms the \emph{record times} associated
to the process $M_n$, namely the times where $M_n$ jumps. The corresponding record values are given 
by the $X_{\tau_k}=M_{\tau_k}$. For the process $Y_n(t)=a_n(M_{[nt]}-b_n)$, we see
that the jumps of $Y_n(t)$ occur precisely at the times $t_k=\tau_k/n$ where $\tau_k$ is
a record time. The jump values $Y_n(t_k)$ are then the (normalised) record values
$a_n(X_{\tau_k}-b_n).$ We shall use point process theory to describe the distributional
behaviour of these jump times and jump values.

\subsection{Overview of point process theory}
To study extremal processes and their corresponding jump processes we use
a point process approach. We recall some general properties of point 
processes, see \cite[Chapter 3]{Resnick3}.
Let $(\Omega,\mathcal{B},\mu)$ be a probability space,
where $\mathcal{B}$ is a $\sigma$-algebra of subsets of $\Omega$.
Let $E\subset\mathbb{R}^d$ be a state space, with Borel $\sigma$-algebra
$\mathcal{E}$. The set $E$ is the region in which points will be defined.
Given a sequence of (vector-valued) random variables $X_i:\Omega\to E$,
consider the quantity 
$\xi=\sum_{i=1}^{\infty}\delta_{X_i}$, where $\delta_x$ is the 
Dirac measure at $x$. 
Then $\xi:(\Omega,\mathcal{B},\mu)\to (M_{p}(E),
\mathcal{M}_p(E))$ defines a point process. The set $M_{p}(E)$ is
the collection of point (counting) measures $m$ on $E$, with
$m(A)<\infty$ if $A\subset E$ is compact.
The set $\mathcal{M}_p(E)$ is the corresponding $\sigma$-algebra of subsets 
of $M_p(E)$.
Thus given $A\subset E$, $\xi(A)\in\mathbb{N}$ is itself a random variable 
associated to the sequence $\{X_i\}$, i.e., given $\omega\in\Omega$, 
and $A\subset E$,
$$\xi:=\xi(A,\omega)=\sum_{i=1}^{\infty}\delta_{X_i(\omega)}(A)
=\sharp\{i:X_i(\omega)\in A\}.$$

Of interest to us are the special class of point processes known
as \emph{Poisson random measures} (PRM). We say that
a point process $\xi$ is a PRM with mean measure $\lambda$ 
if the following hold:
\begin{enumerate}
\item For all $A\in\mathcal{E}$,
$$\mu(\xi(A, \cdot)=k)=\frac{\lambda(A)^ke^{-\lambda(A)}}{k!}.$$
\item If $A_i\cap A_j=\emptyset$ for all $i,j\in\{1,\ldots,m\}$, and all
$m\geq 1$ then $\xi(A_1),\ldots,\xi(A_m)$ are independent random variables.
\end{enumerate}
The mean (or \emph{intensity}) measure $\lambda$ satisfies 
$\lambda(A)=\E(\xi(A))$ for all $A\in\mathcal{E}$. If we can write
$\lambda=\int_{A}\gamma(t)\,dt$, then we call $\gamma(t)$ 
the intensity of the process $\xi$. 

We now consider convergence of a sequence of point processes. 
A sequence $\{\xi_n\}$ of point processes converges in distribution (or converges weakly) to a point 
process $\xi$ in $M_p(E)$ if for any finite collection
$B_1,\ldots,B_m$ of bounded Borel sets in $E$ with $\mu(\xi(\partial B_i)=0)=1$, 
$$(\xi_n(B_1),\xi_n(B_2),\ldots, \xi_n(B_m))
\to (\xi(B_1),\xi(B_2),\ldots, \xi(B_m)).$$
To prove such convergence for sequences of (simple) point processes it suffices
to check the following criteria due to Kallenberg \cite{Kallenberg}:
\begin{enumerate}
\item[(a)] $\E(\xi_n(B))\to \E(\xi(B))$, where 
$B$ is a semi-closed rectangle in $E$.
\item[(b)] $\mu\{\xi_n(B)=0\}\to\mu\{\xi(B)=0\}$ for all 
finite unions of semi-closed rectangles $B\subset E$.
\end{enumerate}
In the definition above, we take semi-closed (and disjoint unions) of intervals of the form
$(a,b]$ for subsets of $\mathbb{R}$, while in $\mathbb{R}^2$ we 
take semi-closed (and disjoint unions of) rectangles of the form $(a,b]\times (c,d]$.

Next we discuss vague convergence of measures, and convergence of certain transformations
of measures. These concepts will be used in Section \ref{sec.extreme_process}.
We say that a sequence of measures $\{\mu_n\}$ convergences vaguely to $\mu$
(written $\mu_n\overset{\small{v}}{\to}\mu$), if $\int g\, d\mu_n\to\int g\,d\mu$ for all continunuous 
functions $g$ that are compactly supported. 
In general vague convergence does not imply weak convergence, e.g. $\mu_n=\delta_n$ (the Dirac mass
at $n\in\mathbb{N}$) converges vaguely to $\mu=0$, but not weakly. For point processes,
there exists a vague metric which makes $M_p(E)$ a complete metric space.

To study weak convergence of certain functionals of point process we will
make use of the \emph{Continuous Mapping Theorem} (CMT).
Consider metric spaces $M$ and $M'$ and let 
$\mu$ be a probability measure. 
A function $h: M\to M'$ is a.s. continuous if the set of 
discontinuities of $h$ has $\mu$-measure zero. Suppose that $\xi_n\overset{\small{d}}{\to} \xi$ in $M$,
then the Continuous Mapping Theorem asserts that
$h(\xi_n)\overset{\small{d}}{\to} h(\xi)$ in $M'$ 
provided $h$ is a.s.\ continuous.

\subsection{The record time and record value point processes }
We now state distributional results for the record-time and record-value
jump processes. For the process $Y_n(t)=a_n(M_{[nt]}-b_n)$, recall that
$t_k=\tau_k/n$ are the jump times. i.e. where $X_{\tau_k}>M_{\tau_k-1}$.
The jump values are given by $Y(t_k)$. We consider the following
two point processes (defined on subsets of $\mathbb{R}$):
\begin{equation}
\mathcal{R}_n:=\sum_{j=1}^{\infty}\delta_{\frac{j}{n}}\cdot 1_{\{X_j>M_{j-1}\}},\quad
\mathcal{V}_n:=\sum_{\tau_k}\delta_{Y_n(\tau_k/n)},
\end{equation}
the former is the \emph{record time process} and the latter is the \emph{record value process}.
\begin{thm}\label{thm.records}
Suppose that $(\XX, f, \mu)$ is a measure preserving dynamical system,
and suppose that there exist sequences $\{a_n\}, \{b_n\}$ and a non-degenerate
function $G(u)$ such that \eqref{eq.G_rep} holds.
Suppose that for any $r\geq 1$ and any sequence $\{ x_i,i\leq r\}$ condition 
$\mathcal{D}_r(u^{(k)}_n)$ holds together with $\mathcal{D}_r'(u^{(k)}_n)$. We have the following
cases:
\begin{enumerate}
\item The point process $\mathcal{R}_n$ converges weakly to the point process $\mathcal{R}$
on state space $(0,\infty)$. The process $\mathcal{R}$ is a PRM with intensity $\gamma(t)=1/t$.
i.e. for any $0<a<b<\infty$:
$$\lim_{n\to\infty}\mu\{\mathcal{R}_n(a,b)=k\}= \frac{(\log(b/a))^k}{k!}\cdot\frac{a}{b}.$$
\item The point process $\mathcal{V}_n=\sum_{\tau_k}\delta_{Y(\tau_k/n)}$
converges weakly to the point process $\mathcal{V}$ on state space $E\subset\mathbb{R}$
(the domain of $G$), where $\mathcal{V}$ is a PRM with intensity measure 
$\lambda_{V}$ given by $\lambda_V([a,b])=-\log(-\log G(b))+\log(-\log G(a))$.
\end{enumerate}
\end{thm}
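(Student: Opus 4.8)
The plan is to prove the two parts of Theorem~\ref{thm.records} by realizing both $\mathcal{R}_n$ and $\mathcal{V}_n$ as images under almost surely continuous maps of an underlying exceedance point process, and then to invoke the Continuous Mapping Theorem together with Kallenberg's criteria. The natural object to start from is the two-dimensional point process of normalized exceedances,
\begin{equation*}
N_n := \sum_{j\ge 1}\delta_{\left(\frac{j}{n},\,a_n(X_j-b_n)\right)},
\end{equation*}
on the state space $(0,\infty)\times E$. The first step is to show that, under hypotheses \eqref{eq.G_rep} together with $\mathcal{D}_r(u^{(k)}_n)$ and $\mathcal{D}_r'(u^{(k)}_n)$, the process $N_n$ converges weakly to a PRM $N$ on $(0,\infty)\times E$ whose mean measure is the product of Lebesgue measure $dt$ on the time axis and the measure $d(-\log G(y))$ on the value axis. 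I would verify this via Kallenberg's conditions (a) and (b): condition (a) is just $\E(N_n(B))=n\,\mu\{a_n(X_1-b_n)\in (c,d]\}\cdot \tfrac{1}{n}(\text{time length}) \to \ell\cdot(\log G(c)-\log G(d))$, which is exactly \eqref{eq.G_rep} applied to the endpoints; condition (b), the asymptotic emptiness/independence on disjoint rectangles, is precisely what $\mathcal{D}_r$ (block independence) and $\mathcal{D}_r'$ (no clustering of exceedances) are designed to deliver, via the standard blocking argument that splits $[0,n)$ into $k$ blocks, uses $\mathcal{D}_r$ to decouple them up to an error $n\gamma(n,t)\to 0$, and uses $\mathcal{D}_r'$ to discard multiple exceedances inside a block.

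With $N_n \overset{d}{\to} N$ in hand, both parts follow by applying suitable functionals. For part (2), the record-value process $\mathcal{V}_n$ is obtained from $N_n$ by keeping only those points $(j/n, a_n(X_j-b_n))$ that are running maxima and then projecting onto the value coordinate; equivalently, $\mathcal{V}$ is the point process of values at which the extremal-$G$ process $Y_G$ jumps. The intensity $\lambda_V([a,b])=-\log(-\log G(b))+\log(-\log G(a))$ is then read off from the third bulleted property of the extremal-$F$ process in the introduction, which gives the jump-transition kernel in terms of $Q(y)=-\log G(y)$; integrating the jump rate yields exactly $d(-\log Q(y))=d(-\log(-\log G(y)))$. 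For part (1), the record-time process $\mathcal{R}_n$ records the \emph{times} of those same jumps, so it is the image of $N$ under projection onto the time axis restricted to record points. The key observation is that for an extremal process driven by a homogeneous PRM, the record times form a PRM on $(0,\infty)$ with intensity $1/t$ (the classical record-time result), independent of $G$; the count $\mathcal{R}(a,b)$ is then Poisson with parameter $\log(b/a)=\int_a^b t^{-1}dt$, and the displayed formula $\mu\{\mathcal{R}_n(a,b)=k\}\to \tfrac{(\log(b/a))^k}{k!}\cdot\tfrac{a}{b}$ is just $e^{-\lambda}\lambda^k/k!$ with $\lambda=\log(b/a)$ and $e^{-\lambda}=a/b$.

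The main obstacle I expect is \emph{continuity} of these functionals, which is where the Continuous Mapping Theorem must be justified rather than invoked glibly. The maps "select the running-maximum points" and "project onto the time (resp.\ value) axis" are not continuous on all of $M_p\big((0,\infty)\times E\big)$: they can fail at configurations with ties (two points sharing a coordinate) or with accumulation of points at the boundary. I would therefore need to argue that the limiting PRM $N$ almost surely avoids these bad configurations---the intensity is nonatomic, so ties occur with probability zero, and points do not accumulate in any compact region---so that the discontinuity set of the selection-and-projection map carries $N$-measure zero. Establishing this a.s.\ continuity, together with checking that the record-time PRM obtained in the limit is genuinely the $1/t$-intensity process (i.e.\ that the record structure of $Y_G$ matches the classical i.i.d.\ record theory), is the technical heart of the argument; once it is in place, the CMT delivers $\mathcal{R}_n\overset{d}{\to}\mathcal{R}$ and $\mathcal{V}_n\overset{d}{\to}\mathcal{V}$ immediately.
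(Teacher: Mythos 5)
Your proposal is correct in substance and shares the paper's backbone: both proofs first establish weak convergence of the planar exceedance process $\sum_i \delta_{(i/n,\,a_n(X_i-b_n))}$ to a PRM with mean measure $\mathrm{Leb}\times\lambda_G$ (this is exactly Theorem~\ref{thm.plane}, whose proof occupies Sections~\ref{sec.proof.thm.plane} and \ref{sec.proof.prop.dr_un} via Proposition~\ref{prop.dr_un} and a thinning construction --- so your one-line ``standard blocking argument'' for Kallenberg's condition (b) considerably understates the work, and note a sign slip in your condition (a): the limit is $\log G(d)-\log G(c)$, not the reverse), and then both invoke the CMT. Where you genuinely diverge is the choice of functionals. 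The paper never applies a ``select the running-maximum points and project'' map directly on $M_p\bigl((0,\infty)\times E\bigr)$; instead it factors through the Skorokhod space, applying $H_3\circ H_1$ for the record times (where $H_1$ builds the sup-path and $H_3$ extracts jump times of elements of $\widetilde{\mathbb{D}}$, the cadlag functions with isolated jumps) and $H_3\circ H_2$ for the record values, so that part (2) arises as the jump \emph{times} of the inverse process $Y_G^{\leftarrow}$ rather than by reading the jump kernel of $Y_G$ as you do. The payoff of the paper's route is that a.s.\ continuity of $H_1$, $H_2$, $H_3$ with respect to a PRM is already established in Resnick, and the intermediate objects are precisely the statements of Theorems~\ref{thm.Y-process} and \ref{thm.Y-inv.process}, so the record theorems come nearly for free. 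Your direct route is more intuitive but buys you the hardest part as new work: your record-selection map is not one of the stock continuous functionals, and its discontinuity set is genuinely delicate --- beyond ties, vague convergence on $(0,\infty)\times E$ gives no control over points with time coordinate near $0$ (compacts are bounded away from the boundary), yet such points determine which later points are records; you flag this correctly as the technical heart, but a complete proof would essentially have to reproduce the boundary analysis that Resnick's continuity lemmas for $H_1$ and $H_3$ already encapsulate. So: same skeleton, a genuinely different (and defensible) CMT step, with your version trading citation of known continuity results for a bespoke continuity argument you would still need to write out in full.
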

Hence Theorem \ref{thm.records} implies that the process $\mathcal{R}_n$
converges to a PRM $\mathcal{R}$ on $E=(0,\infty)$ with intensity $\gamma(t)=1/t$
irrespective of the underlying distribution function $G$. We remark that if item (1) of the Theorem \ref{thm.records} holds
for a particular observable $\phi:\XX\to\mathbb{R}$, then $\mathcal{R}_n\overset{{\small d}}{\to}\mathcal{R}$ 
holds for any injective 
and monotone increasing transformation of $X_1$.
By contrast, the limit process $\mathcal{V}_n$ does depend on $G$, and hence on the form of $\phi$. We do not consider all possibilities,
but remark that in the case of $G$ being the Gumbel distribution,
$$\lim_{n\to\infty}\mu\{\mathcal{V}_n(a,b)=k\}= \frac{(b-a)^ke^{-(b-a)}}{k!}.$$
This would apply to the dynamical process $X_n=-\log\dist(f^{n-1}(x),\tilde{x})$.
We remark further that
the process $\mathcal{V}_n$ determines the jump \emph{times}
for inverse process  $Y_{G}^{\leftarrow}(t)$.

\section{Application of Results}\label{sec.applications}

In this section we give an overview of dynamical system models
that fit within our assumptions, namely $\mathcal{D}_r(u^{(k)}_n)$ and
$\mathcal{D}_r'(u^{(k)}_n)$.  As mentioned above, conditions of this type have been considered in many recent papers.  
In most applications it is the short-range condition which is hardest to prove, and as noted above, once the usual short range condition is proved, $\mathcal{D}_r'(u^{(k)}_n)$ follows immediately.  Checking that the long-range mixing condition 
$\mathcal{D}_r(u^{(k)}_n)$ holds follows almost exactly as usual, namely via decay of correlations:

We say that $(\XX, f, \nu)$ has decay of correlations in Banach spaces
$\mathcal{B}_1$ versus $\mathcal{B}_2$, (DC), if
there exists a monotonically decreasing sequence $\Theta(j)\to 0$ such that
for all $\varphi_1\in\mathcal{B}_1$ and all $\varphi_2 \in\mathcal{B}_2$:
\[
\left|\int \varphi_1 \cdot \varphi_2\circ f^j d \nu -\int \varphi_1 d \nu \int \varphi_2 d\nu\right|
\leq
\Theta(j) \|\phi_1\|_{\mathcal{B}_1} \|\phi_2\|_{\mathcal{B}_2},
\]
where $\|\cdot\|_{\mathcal{B}}$ denotes the norm in space $\mathcal{B}$.

If this condition holds and $\B_2$ contains indicator functions on balls with norm of order the measure of the ball, and $\Theta(j)$ decays fast enough then it is easy to prove $\mathcal{D}_r(u^{(k)}_n)$.  When this doesn't happen, there are approximation arguments (eg. \cite{Gupta}) to derive the same result.

In checking condition $\mathcal{D}_r'(u^{(k)}_n)$ the form of the observable is of significance, and in most applications observables take the form $\phi(x)=\psi(\dist(x,\tilde{x}))$ with $\dist(\cdot,\cdot)$ the Euclidean metric. 
Checking $\mathcal{D}_r'(u^{(k)}_n)$ is then reduced to understanding the recurrence statistics for asymptotically
shrinking balls. For other types of observables with general level set geometries, checking $\mathcal{D}_r'(u^{(k)}_n)$
(and its variant) becomes a much harder problem.

We will list a set of dynamical systems to which our results apply, where the details of how to check our mixing conditions are essentially the same as in the references given.

\subsection{Expanding interval maps}
The simplest example is the tent map $f(x)=1-|1-2x|$, defined for $x\in[0,1]$. 
In this case $\nu=\mathrm{Leb}$, and the system has exponentially
decaying $\Theta(j)$ as defined in condition (DC) for 
$\mathcal{B}_1=\mathrm{Lip}$ and $\mathcal{B}_2=L^{\infty}$.
The much more general case of Rychlik maps was considered in \cite{FFT1}: here $\mathcal{B}_1=BV$ and $\mathcal{B}_2=L^1$.

\subsection{Non-uniformly expanding maps: rapid mixing}
Consider the quadratic map $f(x)=ax(1-x)$, defined for $x\in[0,1]$ and parameter set $a\in[0,4]$.  For a positive measure set of parameters it is known that $\nu$ is absolutely continuous with respect to Lebesgue
measure and condition (DC) 
holds for $\Theta(n)=\theta^{n}_0$, (some $\theta_0<1$). Our conditions can be shown to hold using ideas from \cite{Collet}.

\subsection{Intermittent maps with polynomial decay of correlations}
Consider the class of intermittent 
type maps $f:[0,1]\to[0,1]$ which take the form
\begin{equation*}\label{eq.intermittent}
f(x) =
\begin{cases}
x(1 + 2^{\alpha} x^{\alpha}) &\text{if } 0 \le x < \frac{1}{2};\\
2x-1 &\text{if } \frac{1}{2} \le x \le 1,
\end{cases}
\end{equation*}
with $\alpha\in(0,1)$. This system admits an absolutely continuous invariant measure $\nu$ and condition (DC) applies, 
with $\Theta(n)=O\left(n^{1-\frac{1}{\alpha}}\right)$, see \cite{Young2}. 
Our conditions hold as in \cite{HNT}, namely there is an explicit value $\alpha_0\in(0,1)$ such that conditions
$\mathcal{D}_r(u^{(k)}_n)$ and $\mathcal{D}_r'(u^{(k)}_n)$ hold for all $\alpha\in [0, \alpha_0)$. 
We remark that it is the methodology used in the actual checking of the conditions that leads to the permissible range of 
$\alpha$, and the bound $\alpha_0$. It is a conjectural on whether we can take $\alpha_0=1$, see Section \ref{sec.discussion}.

\section{Generating extremal and jump processes from point processes}\label{sec.extreme_process}
Our approach to proving Theorems \ref{thm.Y-process},
\ref{thm.Y-inv.process} and \ref{thm.records} is to consider
weak convergence of point processes $\xi_n$
(defined on subsets of the plane) and apply the CMT.
As before we suppose that there exist constants $a_n, b_n$ and a non-degenerate function
$G$ such that $n\mu\{X_1>u/a_n+b_n\}\to -\log G(u)$,
and therefore have in mind a dynamical system $(\XX,f, \mu)$ with
absolutely continuous invariant measure $\mu$, and
observable
function $\phi(x)=\psi(\dist(x,\tilde{x}))$, with 
$\psi:\mathbb{R}^+\to\mathbb{R}$ regularly (or slowly) varying. 
The key tool is the following theorem.
\begin{thm}\label{thm.plane}
Suppose that $(\XX,f,\mu)$ is a measure preserving dynamical system, 
and suppose that there exist sequences $\{a_n\}, \{b_n\}$ and a non-degenerate
function $G(u)$ such that \eqref{eq.G_rep} holds.
Suppose that for any $r\geq 1$, and any sequence $\{ x_i,i\leq r\}$ condition 
$\mathcal{D}_r(u^{(k)}_n)$ holds together with $\mathcal{D}_r'(u^{(k)}_n)$. 
Then the point process $\xi_n=\sum_{i=1}^{n}\delta_{z(i,n)}$
with $z(i,n)=(\frac{i}{n},a_n(X_i-b_n))$ 
converges weakly to a PRM in state space $E=[0,\infty)\times\mathbb{R}$
whose intensity measure $\lambda=\mathrm{Leb}\times\lambda_G$ with 
$\lambda_{G}([a,b])=\log G(b)-\log G(a)$.
\end{thm}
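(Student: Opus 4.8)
The plan is to establish the convergence through the two Kallenberg criteria (a) and (b) recalled above, applied to the candidate limit $\xi$, the PRM on $E=[0,\infty)\times\mathbb{R}$ with mean measure $\lambda=\mathrm{Leb}\times\lambda_G$. Since the first coordinates $i/n$ of the atoms of $\xi_n$ are distinct, $\xi_n$ is simple, and $\xi$ is simple because $\lambda$ is non-atomic ($G$ being continuous); hence Kallenberg's theorem applies. It therefore suffices to treat semi-closed rectangles $B=(s,t]\times(a,b]\subset E$ and their finite disjoint unions, and because point process convergence is tested on bounded Borel sets we may always restrict to bounded $B$, so that only the finitely many indices $i$ with $s<i/n\le t$ contribute to $\xi_n(B)$.

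Criterion (a) is a direct computation using stationarity and \eqref{eq.G_rep}. For $B=(s,t]\times(a,b]$ one has
\[
\E(\xi_n(B))=\sum_{s<i/n\le t}\mu\{a/a_n+b_n<X_i\le b/a_n+b_n\}=\big(\lfloor nt\rfloor-\lfloor ns\rfloor\big)\,\mu\{a/a_n+b_n<X_1\le b/a_n+b_n\}.
\]
Writing the band probability as the difference of the two tail probabilities and applying \eqref{eq.G_rep} to each gives $n\,\mu\{a/a_n+b_n<X_1\le b/a_n+b_n\}\to \log G(b)-\log G(a)=\lambda_G([a,b])$, while $(\lfloor nt\rfloor-\lfloor ns\rfloor)/n\to t-s$. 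Hence $\E(\xi_n(B))\to (t-s)\,\lambda_G([a,b])=\lambda(B)=\E(\xi(B))$, as required.

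Criterion (b) is the heart of the argument and is where the mixing hypotheses enter. Given a finite disjoint union $B=\bigcup_{j=1}^{m}(s_j,t_j]\times(a_j,b_j]$, I would first partition the time window into finitely many subintervals on which the forbidden vertical set is constant, so that $\{\xi_n(B)=0\}$ becomes the event that on each such subinterval no $X_i$ has $a_n(X_i-b_n)$ in a fixed finite union of bands $S=\bigcup(a_j,b_j]$. The finitely many endpoints $a_j,b_j$ produce an ordered family of thresholds $u^{(1)}_n\ge\cdots\ge u^{(r)}_n$ of the form $u^{(k)}_n=x_k/a_n+b_n$ as in \eqref{eq.u-kn}, which is exactly the family for which $\mathcal{D}_r(u^{(k)}_n)$ and $\mathcal{D}_r'(u^{(k)}_n)$ are assumed. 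On this family I would run the standard blocking argument: split the indices into $k_n\to\infty$ blocks of length $\ell_n$ separated by gaps of length $t_n$, with $t_n=o(n)$ the sequence from $\mathcal{D}_r(u^{(k)}_n)$ and $t_n/\ell_n\to 0$ so the gaps are asymptotically negligible. Repeated application of $\mathcal{D}_r(u^{(k)}_n)$ then replaces the global avoidance probability by the product of the per-block avoidance probabilities, with total error at most $k_n\gamma(n,t_n)\le n\gamma(n,t_n)\to 0$. Within a single block $J$, writing $A_i=\{a_n(X_i-b_n)\in S\}$ and $p_i=\mu(A_i)$, a Bonferroni inequality gives
\[
\sum_{i\in J}p_i-\sum_{\substack{i,i'\in J\\ i<i'}}\mu(A_i\cap A_{i'})\ \le\ \mu\Big(\bigcup_{i\in J}A_i\Big)\ \le\ \sum_{i\in J}p_i;
\]
since membership of $S$ forces $X_i$ above the lowest threshold $u^{(r)}_n$, the double sum is dominated by $\sum_{i<i'}\mu\{X_i>u^{(r)}_n,\,X_{i'}>u^{(r)}_n\}$, which summed over all blocks is exactly the quantity controlled and sent to $0$ by $\mathcal{D}_r'(u^{(k)}_n)$. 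Thus each block avoids $B$ with probability $1-\sum_{i\in J}p_i+o(k_n^{-1})$, and by the computation in (a) the per-block mass $\sum_{i\in J}p_i$ is asymptotically $\lambda(B)/k_n$; multiplying over the $k_n$ blocks yields $\mu\{\xi_n(B)=0\}\to e^{-\lambda(B)}=\mu\{\xi(B)=0\}$.

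The hard part will be the careful bookkeeping in Criterion (b): handling the several time-dependent thresholds simultaneously (which is precisely what the multi-threshold formulation of $\mathcal{D}_r(u^{(k)}_n)$ is designed to permit), verifying that the anti-clustering bound of $\mathcal{D}_r'(u^{(k)}_n)$, stated for exceedances of a single threshold, genuinely dominates the second-order Bonferroni terms arising from bands uniformly across blocks, and choosing the block and gap scales $\ell_n,t_n$ so that the decoupling error, the clustering error, and the total gap length are simultaneously negligible. Once these estimates are in place the Poisson product limit, and hence both Kallenberg criteria, follow.
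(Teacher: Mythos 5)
Your criterion (a) is exactly the paper's computation, and your within-block Bonferroni estimate is sound: band membership does imply exceedance of the lowest threshold, so $\mathcal{D}_r'(u^{(k)}_n)$ dominates the second-order terms. The genuine gap is the cross-block decoupling. You invoke ``repeated application of $\mathcal{D}_r(u^{(k)}_n)$'' to factor the probability that no normalised $X_i$ lands in the union of bands $S$ into a product over blocks, but $\mathcal{D}_r(u^{(k)}_n)$ as defined only bounds the correlation between a single exceedance event $\{X_1>u_{n,0}\}$ and an intersection of \emph{maxima} events $\{M(\mathcal{A}_j+t)\le u_{n,j}\}$. A band-avoidance event --- ``no $i\in J$ with $a_n(X_i-b_n)\in S$'' --- is not expressible as an intersection of events $\{M(\cdot)\le u\}$: it is the event that every exceedance of a lower threshold also exceeds the paired upper threshold, i.e.\ \emph{equality of two exceedance counts}, which is non-monotone in the thresholds. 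The telescoping device that converts blocking into events of the admissible shape (Lemma~\ref{lem:4.2}, following \cite{FFT1}) works precisely because deleting a block from a maxima event changes it by unions of sets of the form $\{X_i>u\}\cap\{M(\cdot)\le u'\}$, which is the shape $\mathcal{D}_r(u^{(k)}_n)$ handles; for band events the analogous decomposition produces conditioning events that are again band-avoidances, so the induction never closes, and your factorization step is not licensed by the stated hypotheses. (Your error tally $k_n\gamma(n,t_n)$ is also optimistic --- each block costs of order $r_n$ applications of the condition, whence the total error is of order $n\gamma(n,t_n)$, which is exactly why the hypothesis $n\gamma(n,t_n)\to 0$ is formulated that way --- but this is secondary.)

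The paper's proof avoids decoupling band events altogether, and this is the idea your proposal is missing. It first proves Proposition~\ref{prop.dr_un} (asymptotic independence of maxima on disjoint time intervals at several thresholds), where all events are of maxima shape so the blocking argument applies, and from it Proposition~\ref{prop.lines}: the multi-threshold exceedance processes $\xi^{(k)}_n$, placed on horizontal lines, converge \emph{jointly} to a family of successively thinned Poisson processes --- here Kallenberg's avoidance criterion involves only events $\{M(\cdot)\le u^{(k)}_n\}$. Bands enter only afterwards, via the identity \eqref{eq.noentry}: $\{\hat{\xi}_n(B)=0\}$ is exactly the event that the exceedance counts at paired thresholds coincide, $\bigcap_j\{\xi^{(2j-1)}_n(A)=\xi^{(2j)}_n(A)\}$. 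Since the limiting counts are integer-valued, this is a continuity set, so the joint weak convergence of Proposition~\ref{prop.lines} transfers its probability to the limit, where it is computed by the thinning combinatorics to be $\exp\{-\mathrm{Leb}(B)\}$; a change of variables then yields the intensity $\lambda=\mathrm{Leb}\times\lambda_G$. To repair your argument you would either need to follow this two-step detour, or assume a strictly stronger mixing condition formulated for band-avoidance (equivalently, count-equality) events, which is not what $\mathcal{D}_r(u^{(k)}_n)$ provides.
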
 
We shall postpone the proof of this theorem to Section 
\ref{sec.proof.thm.plane} as it relies heavily on conditions 
$\mathcal{D}_r(u^{(k)}_n)$ and $\mathcal{D}_r'(u^{(k)}_n)$. We now show how
the main theorems follow from Theorem \ref{thm.plane} and the CMT. 
Our approach follows that of \cite{Res_weak, Resnick3} and an outline is as follows.
Let $M$ and $M'$ be two metric spaces with associated metrics $d$ and $d'$ (resp.).
A main difficulty is showing when a given map $h:M\to M'$ is a.s.\ continuous.
An element $\xi\in M$ is a discontinuity of $h$ if there exists a sequence $\{\xi_n\}$ with
$d(\xi_n,\xi)\to 0$ but $d'(h(\xi_n),h(\xi))\not\to 0$. 
The map is a.s.\ continuous if the set
of discontinuities $\xi\in M$ has $\mu$-measure 0. This agrees with the usual notion of 
(dis)continuity, although we must keep track of the underlying metrics being used. 
For definiteness, consider the case where $M=M_p(E)$ for some $E\subset\mathbb{R}^m$, $(m\geq 1)$,
and $M'=\mathbb{D}(0,\infty)$. We take $d$ to be the vague metric and take $d'$ to 
be the Skorokhod $J_1$ metric. To show a.s.\ continuity of $h$ , 
we must show that $d'(h(\xi_n),h(\xi))\to 0$ for all sequences $\xi_n$ with
$\xi_n\overset{v}{\to}\xi$, and for $\mu$-a.e. $\xi\in M$. Applying the CMT will then imply
that $h(\xi_n)\overset{d}{\to}h(\xi)$ for all such sequences $\xi_n\overset{d}{\to}\xi$. 

\begin{proof}[Proof of Theorem \ref{thm.Y-process}] Given the planar point process $\xi:=\sum_{i=1}^{\infty}\delta_{(t_i,y_i)}$, 
let $H_1:M_p((0,\infty)\times\mathbb{R})\to \mathbb{D}(0,\infty)$ 
be the real valued function defined
by $H_1(\xi)(t)=\sup\{y_i: t_i\leq t\}.$ Then $H_1$ maps point processes 
to $\mathbb{D}(0,\infty)$. If $\xi$ is a PRM then as is shown in \cite[Chapter 4]{Resnick3}
the map $H_1$ is a.s.\ continuous with respect to $\xi$. This is achieved by taking 
any sequence $\{\xi_n\}$ with $\xi_n\overset{v}{\to}\xi$ (i.e. converging vaguely) and showing that 
$d_{a,b}(h(\xi_n)(t),h(\xi)(t))\to 0$ for any $0<a<b<\infty$. Checking the latter is sufficient to prove convergence in
$\mathbb{D}(0,\infty)$.

In the case where $\xi$ is a PRM 
with intensity measure $\lambda=\mathrm{Leb}\times\lambda_G$,
(where $\lambda_{G}([a,b])=\log G(b)-\log G(a)$), then
$H_1(\xi)(t)$ has finite dimensional distributions
which coincide with that of an extremal-G process $Y_G(t)$.
Hence for the process $\xi_n$ defined in Theorem \ref{thm.plane},
the CMT asserts that
$H_1(\xi_n)(t)\overset{d}{\to} H_1(\xi)(t)$, and so $Y_n(t)$ converges weakly
to an extremal-G process $Y_{G}(t)$.
\end{proof}

\begin{proof}[Proof of Theorem \ref{thm.Y-inv.process}] 
Given a planar point process $\xi:=\sum_{i=1}^{\infty}\delta_{(t_i,y_i)}$
consider the function $H_2$ defined by $H_2(\xi)(t)=\inf\{t_i:y_i>t\}$. 
This function is again a.s.\ continuous with respect to a PRM $\xi$ \cite[Chapter 4]{Resnick3}. 
If in particular $\xi$ is a PRM 
with intensity measure $\lambda=\mathrm{Leb}\times\lambda_G$,
(and $\lambda_{G}([a,b])=\log G(b)-\log G(a)$), then
$H_2(\xi)(t)$ has finite dimensional distributions equivalent
to those of $Y^{\leftarrow}_G(t)$ (the inverse of $Y_G(t)$).
Hence for the process $\xi_n$ defined in Theorem \ref{thm.plane},
the CMT asserts that
$H_2(\xi_n)(t)\overset{d}{\to} H_2(\xi)(t)$, and so $Y^{\leftarrow}_n(t)$ converges weakly to $Y^{\leftarrow}_G(t)$.

We remark that it is tempting to apply $H_1$ and then a mapping $\tilde{H}$
with $\tilde{H}(y)=y^{\leftarrow}$ for $y\in\mathbb{D}(0,\infty)$. However this latter map is not continuous on 
$\mathbb{D}(0,\infty)$.
\end{proof}

\begin{proof}[Proof of Theorem \ref{thm.records}]
We first consider item (1), and the process $\mathcal{R}_n$.
Consider the subset $\widetilde{\mathbb{D}}(0,\infty)$ 
of $\mathbb{D}(0,\infty)$ consisting of functions which are
constant between isolated jumps (i.e., the jumps do not accumulate anywhere in $(0, \infty)$). 
For an element $Y(t)\in\widetilde{\mathbb{D}}(0,\infty)$, 
let $H_3:\widetilde{\mathbb{D}}(0,\infty)\to M_p(0,\infty)$ be the 
counting function: $H_3(Y(t))=\sum_i\delta_{t_i}(0, t)$, 
where $t_i$ are jump times for $Y(t)$. As shown in \cite{Resnick3}
the function $H_3$ is a.s.\ continuous when restricted to functions on $\widetilde{\mathbb{D}}(0,\infty)$. 
This is achieved by taking a sequence $y_n\in\widetilde{\mathbb{D}}(0,\infty)$ converging to 
$y\in\widetilde{\mathbb{D}}(0,\infty)$ (with respect to $J_1$ metric), and then showing that
$H_3(y_n)\to H_3(y)$ in the vague metric on $M_p(0,\infty)$. The basic observation here 
is that closeness of the graphs of $y_n$ and $y$ (in the $J_1$ sense) implies that their discontinuities are close.
Hence the corresponding point masses of $H_3(y_n)$ and $H(y)$ are close (in the vague metric sense).

If $Y_G(t)$ is an extremal-G process, then $H_3(Y(t))$
is a PRM on $(0,\infty)$ with intensity $\gamma(t)=1/t$.
Hence to get the required convergence result for $\mathcal{R}_n$ 
we apply the composition $H_3\circ H_1$ to the planar point process and then the CMT.

Now consider item (2), and the process $\mathcal{V}_n$.
As in the proof of item (1), we again consider
the function $H_3$, but this time apply it to elements
of $Y^{\leftarrow}_G(t)\in\widetilde{\mathbb{D}}(-\infty,\infty)$.
The corresponding process
$H_3(Y^{\leftarrow}_G)(t)=\sum_{i}\delta_{Y(t_i)}$ is a PRM
with mean-measure $\lambda([a,b])=-\log(-\log G(b))+\log(-\log G(a))$.
Hence to get the required convergence result for $\mathcal{V}_n$, we apply
the composition $H_3\circ H_2$ to the planar point process sequence $\{\xi_n\}$.
This composition is a.s.\ continuous with respect to the PRM $\xi$, and hence we apply
the CMT.
\end{proof}

\section{Discussion}\label{sec.discussion}
In this article we have developed a general approach to prove convergence to extremal processes
for chaotic dynamical systems. We have also established consequential results such as determining the
statistics of record events. We now make several remarks about the wider applicability of our results and scope for
future investigations. Firstly our results apply to dynamical systems that satisfy the 
$\mathcal{D}_r(u^{(k)}_n)$ and $\mathcal{D}_r'(u^{(k)}_n)$ conditions. For a wide class of non-uniformly expanding dynamical systems, such as those considered within \cite{Collet, FFT1, HNT} our results apply. We note that these conditions
are sufficient for our results. If these conditions fail to hold then it is possible that the conclusions of our results still hold. This might be the case for certain non-mixing systems, such as suspended flows considered in \cite{HNT}. 
Furthermore, we might ask on whether it is possible to by-pass the checking of conditions $\mathcal{D}_r(u^{(k)}_n)$ and 
$\mathcal{D}_r'(u^{(k)}_n)$ to obtain our convergence results. An assumption on the existence of an extreme distribution such as \eqref{eq.Mn-limit} is still expected to be required. However, our results would then apply to a broader class of systems where a link between extremes and return time statistics is known, see \cite{FFT1}. 
 
For hyperbolic systems with attractors (i.e. those that support Sinai-Ruelle-Bowen (SRB) measures) then we 
expect similar conclusions to hold on the existence of an extremal process. For related results on extremes for
hyperbolic systems, see \cite{CC, GHN, Licheng}. To study extremal processes and records for these systems further work is 
required. Particular issues include the actual checking of the conditions $\mathcal{D}_r(u^{(k)}_n)$ and 
$\mathcal{D}_r'(u^{(k)}_n)$, and controlling the regularity of the SRB measure $\mu$ (or regularity of the observable function) 
to ensure existence of a non-degenerate limit function $G$.

It is possible to investigate further situations where $\mathcal{D}_r'(u^{(k)}_n)$ fails, for example
in the case of observable functions maximised at periodic points. For such observables the limit function
$G$ in \eqref{eq.Mn-limit} incorporates an extra parameter known as an \emph{extremal index}, and for dynamical
systems this has been recently studied, for example in \cite{FFT_EI}. To ensure convergence to a corresponding
extremal process, alternative conditions to $\mathcal{D}_r'(u^{(k)}_n)$ would need to be formulated.

The results we have stated about extremal processes and records are not exhaustive. 
Combining Theorem \ref{thm.plane} with the CMT
we can obtain results about the distribution of inter-record times (i.e. $t_{i+1}-t_i$), the jump sizes 
($Y(t_{i+1})-Y(t_{i})$), and the distributions governing the maximum inter-record times/jump sizes, see 
\cite{Res_weak,Res_inv, Resnick3}. For systems that satisfy conditions $\mathcal{D}_r(u^{(k)}_n)$ and
$\mathcal{D}_r'(u^{(k)}_n)$ the joint asymptotic distribution of maxima can also be derived, see 
\cite[Section 5.6]{Leadbetter}. In other directions beyond the scope of this work, we should mention that planar 
convergence of point processes to a PRM (i.e. conclusions similar to that of Theorem \ref{thm.plane}) have been used 
in the study of convergence to Levy processes for certain dynamical systems, see \cite{Ty}. 

Finally we note that our results are all on distributional convergence. In the i.i.d case almost sure convergence results for records is known. 
For an i.i.d process $\{X_n\}$, let $\tau_n$ denote the time to the $n$'th record (as in \eqref{eq.time-to-record}), and $W_n$ the 
number or records observed up to time $n$. Then almost surely we have $(\tau_n)^{1/n}\to e$ and $W_n/\log n\to 1$. For dynamical
systems we conjecture that similar results hold. For the maximum
process, recent work on almost sure convergence is established in \cite{HNT2}. 
In the i.i.d case, almost sure convergence for records is proved by embedding
the maximum process $M_n$ into the extremal process $Y(t)$ for 
$t\in\mathbb{N}$. However, for dependent processes driven by
dynamical systems new ideas are required.

\section{Proof of Theorem \ref{thm.plane}}\label{sec.proof.thm.plane}

It remains to prove Theorem \ref{thm.plane} and we do this as follows.
In the first instance we show in Proposition
\ref{prop.dr_un} how conditions $\mathcal{D}_r(u^{(k)}_n)$ 
and $D_r(u_n)$ can be used to obtain asymptotic independence of blocks 
of maxima on disjoint intervals. Using this result, we then apply the criteria
of Kallenberg to prove the theorem via a thinning
construction as used in \cite{Leadbetter}. 

\subsection{Asymptotic independence of maxima on 
disjoint intervals}

We need to show that the behaviour of maxima in disjoint intervals
is approximately independent. For dynamical systems, condition
$D_r(u^{(k)}_n)$ as used in \cite{Leadbetter} is not readily verifiable, and
hence we propose condition $\mathcal{D}_r(u^{(k)}_n)$ instead, inspired by \cite{FFT1}. As
mentioned in Section \ref{sec.applications}, such a condition can be
easily checked for dynamical systems once information on decay of
correlations is known. However we must show that this condition leads 
to the same conclusion in 
order to apply the thinning constructions of \cite{Leadbetter}, which is the conclusion of the following result.

\begin{prop}\label{prop.dr_un}
Suppose that $p\in\N$ and $A=\cap_{j=1}^{p}I_j$, where $I_j=[a_j,b_j)$. Let $ x_k>0$ for $k=1,\ldots, p$ and for each $1\leq k\leq p$, $(u_{n}^{(k)})_n$ be such that
$n\mu(X_0>u_{n}^{(k)})\to x_k$. Assume that conditions $\mathcal{D}_p'(u_n^{(k)})$ and 
${\mathcal{D}}_p(u_n^{(k)})$ hold. Then
$$\mu\left(\bigcap_{j=1}^{p}\{M(nI_j)\leq u_{n}^{(j)}\}\right)\to\prod_{j=1}^{p} e^{- x_j(b_j-a_j)}.$$
\end{prop}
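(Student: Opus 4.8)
The plan is to run the classical block decomposition of Leadbetter, feeding the two hypotheses in complementary roles: condition $\mathcal{D}_p'(u_n^{(k)})$ supplies a within-block Poisson (anti-clustering) estimate, while $\mathcal{D}_p(u_n^{(k)})$ supplies the decoupling of maxima across well-separated blocks. Concretely, I would partition the integer times of each $nI_j=[na_j,nb_j)$ into $q_j$ consecutive blocks of length $\approx n(b_j-a_j)/q_j$, inserting a gap of length $t_n=o(n)$ between every pair of consecutive blocks (both within an interval and between different intervals). The sites discarded in the gaps total $o(n)$, so by a union bound they carry no exceedance in the limit and may be dropped. Writing $B_1,\dots,B_Q$ for the resulting blocks, with $Q=\sum_j q_j$ and $j(i)$ the index of the interval containing $B_i$, the goal becomes to evaluate $\mu\big(\bigcap_{i=1}^{Q}\{M(B_i)\le u_n^{(j(i))}\}\big)$.

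First I would establish the within-block estimate. For a block $B\subset nI_j$ of length $\ell\approx n(b_j-a_j)/q_j$, Bonferroni gives
\[
\ell\,\mu\{X_0>u_n^{(j)}\}-\sum_{\{i,i'\}\subset B}\mu\{X_i>u_n^{(j)},\,X_{i'}>u_n^{(j)}\}\ \le\ \mu\{M(B)>u_n^{(j)}\}\ \le\ \ell\,\mu\{X_0>u_n^{(j)}\}.
\]
Condition $\mathcal{D}_p'(u_n^{(k)})$ forces the double-exceedance sum over $B$ to be $o(1/q_j)$ in the double limit, and since $\ell\,\mu\{X_0>u_n^{(j)}\}\to x_j(b_j-a_j)/q_j$, I obtain $\mu\{M(B)\le u_n^{(j)}\}=1-x_j(b_j-a_j)/q_j+o(1/q_j)$.

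The decoupling is where the specific shape of $\mathcal{D}_p(u_n^{(k)})$ must be exploited, and this is the step I expect to be the main obstacle. The condition is stated conditionally on a single-coordinate exceedance $\{X_1>u_{n,0}\}$ detached from forward-shifted block maxima, rather than as a product rule for two arbitrary maxima events, so it cannot be applied directly to factor $\{M(B_1)\le u\}$ off the later blocks. The route I would take is to expand $\{M(B_1)>u\}=\bigcup_{i\in B_1}\{X_i>u\}$, apply the union bound, use measure-preservation to move each exceedance site $i$ to time $1$, and then invoke $\mathcal{D}_p(u_n^{(k)})$ to separate that exceedance from the maxima event $C$ for $B_2,\dots,B_Q$ (whose separation from $i$ exceeds $t_n$). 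Combined with the within-block estimate this yields
\[
\big|\,\mu\big(\{M(B_1)\le u\}\cap C\big)-\mu\{M(B_1)\le u\}\,\mu(C)\,\big|=O\big(\ell\,\gamma(n,t_n)\big),
\]
and peeling off $B_1,B_2,\dots$ in turn produces a telescoping error of size $O\big(\sum_i|B_i|\,\gamma(n,t_n)\big)=O\big(n\,\gamma(n,t_n)\big)$, which vanishes by hypothesis. Hence $\mu\big(\bigcap_i\{M(B_i)\le u_n^{(j(i))}\}\big)=\prod_i\mu\{M(B_i)\le u_n^{(j(i))}\}+o(1)\to\prod_j\big(1-x_j(b_j-a_j)/q_j\big)^{q_j}$, and letting each $q_j\to\infty$ gives $\prod_j e^{-x_j(b_j-a_j)}$.

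Beyond this conversion, the remaining difficulties are bookkeeping: I must check that the $\mathcal{D}_p'$ remainder is genuinely $o(1/q_j)$ uniformly as the blocks shrink and across the distinct thresholds $u_n^{(j)}$, and that the gaps can be chosen simultaneously long enough (at least $t_n$) to license each use of $\mathcal{D}_p$ yet short enough ($o(n)$) that deleting them disturbs neither the Poisson means nor the order of the iterated limits in $n$ and the $q_j$. The essential point, and the reason the hypothesis is phrased as $n\gamma(n,t_n)\to0$, is that the decoupling is carried out one exceedance site at a time, so there are of order $n$ applications of the mixing condition whose errors must be absorbed.
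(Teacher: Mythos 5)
Your proposal is correct and follows essentially the same route as the paper's proof, which adapts \cite[Proposition 1]{FFT1}: a Leadbetter-style block decomposition with $t_n$-gaps, Bonferroni plus $\mathcal{D}_p'(u_n^{(k)})$ within blocks, and—exactly as you anticipate at your ``main obstacle''—a conversion of the block-maximum event into single exceedances (the paper's Lemma \ref{lem:4.2}(b)) so that $\mathcal{D}_p(u_n^{(k)})$, after a measure-preserving shift, can be applied one exceedance site at a time, with the $O(n)$ applications absorbed by $n\gamma(n,t_n)\to 0$. The only cosmetic difference is that the paper merges your two steps, iteratively replacing each block by a factor $(1-r_n\mu(X>u_j))$ rather than first factorizing and then estimating block probabilities, before taking the iterated limits $n\to\infty$ then $k\to\infty$ just as you do with your $q_j$.
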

The proof of this proposition is technical and extends the ideas presented 
in \cite[Section 4]{FFT1}. We postpone the proof until Section \ref{sec.proof.prop.dr_un}.

\subsection{Thinning constructions for point processes.}
As a first step we consider the notion of an independent thinning of a 
Poisson point process $\xi$, see \cite[Section 5.5]{Leadbetter}. We say that a process
$\hat{\xi}$ is an independent thinning of $\xi$ with parameter $p\in(0,1)$, if for every point of $\xi$, there is a probability
$p$ that this point is retained in $\hat{\xi}$. 
An elementary argument shows that if $\xi$ has intensity 1, then $\hat{\xi}$ has intensity $p$.

Suppose now that the sequence $ x_1<\cdots< x_r$ is defined, and $u^{(k)}_{n}$ is such that
$n\mu\{X_1>u^{(k)}_n\}\to x_k$. Then $u^{(1)}_n>\cdots> u^{(r)}_n$. Fix horizontal lines $L_1,\ldots, L_r$
in the plane, and define $\delta^{(k)}_{j/n}$ to be the Dirac mass concentrated at the point on $L_k$ whose
horizontal coordinate is $j/n$. The actual position of each $L_k$ is not important provided their (vertical) order is preserved as described below. Given $k\leq r$, we let $\xi^{(k)}_n$ denote the point process
\begin{equation*}\label{eq.pt-line}
\xi^{(k)}_n=\sum_{j=1}^{n}\delta^{(k)}_{j/n}\cdot1_{\{X_j>u^{(k)}_n\}},
\end{equation*}
and let 
$\tilde{\xi}_n=\sum_{k=1}^{r}\xi^{(k)}_n.$ Notice that for each $k$, if a $\tilde{\xi}^{(k)}_n$ has a point at location
$j/n$, then each $\xi^{(l)}_n$ has a points at $j/n$ too (for all $l\geq k$). 
This follows by the ordering of
$u^{(k)}_n$. 

We next define the point process $\tilde{\xi}=\sum_{k=1}^{r}\xi^{(k)}$, 
where $\{\xi^{(k)}\}$ is a sequence of (independently) thinned Poisson point processes, each having points on respective lines $L_k$, with $L_1>L_{2}>\cdots>L_r$ (ordered in vertical height), and with corresponding intensity parameter $ x_k$. In particular, for each $k$, $\xi^{(k)}$ is an independent thinning of 
$\xi^{(k+1)}$ with probability parameter $p= x_k/ x_{k+1}$. 
We have the following result:
\begin{prop}\label{prop.lines}
Suppose that $(\XX, f, \mu)$ is a measure preserving system. Suppose for any $r\geq 1$, and any sequence
$\{ x_i,i\leq r\}$ condition $\mathcal{D}_r(u^{(k)}_n)$ holds together with
$\mathcal{D}_r'(u^{(k)}_n)$. Then the point process $\tilde{\xi}_n$ defined on lines $\{L_k\}_{k=1}^{r}$ 
(as described above) converges weakly to the point process $\tilde{\xi}$ on $(0,1]\times\mathbb{R}$.
\end{prop}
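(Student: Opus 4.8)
The plan is to verify weak convergence of $\tilde{\xi}_n$ to $\tilde{\xi}$ using Kallenberg's criteria (a) and (b) from the point process section. Since $\tilde{\xi}_n = \sum_{k=1}^r \xi^{(k)}_n$ lives on the union of the lines $L_1, \ldots, L_r$, I would decompose the state space $(0,1] \times \mathbb{R}$ into regions intersecting each line, so that any semi-closed rectangle $B$ effectively restricts attention to which lines it meets and which horizontal interval it covers. The key structural observation, noted in the excerpt, is the nesting: if $\xi^{(k)}_n$ has a point at $j/n$ then so does $\xi^{(l)}_n$ for all $l \geq k$ (since $u^{(k)}_n > u^{(l)}_n$). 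This means the combined process is governed entirely by the positions of exceedances of the various thresholds, and counting points of $\tilde{\xi}_n$ in a box reduces to counting indices $j$ with $X_j$ exceeding the appropriate $u^{(k)}_n$.

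First I would check criterion (a), convergence of expectations. For a semi-closed rectangle $B$ meeting line $L_k$ over a horizontal interval $(a,b]$, we have $\mathbb{E}(\xi^{(k)}_n(B)) = \sum_{a < j/n \leq b} \mu\{X_j > u^{(k)}_n\} \approx (b-a) n \mu\{X_1 > u^{(k)}_n\} \to (b-a) x_k$ by stationarity and \eqref{eq.u-kn}. Summing the contributions over the finitely many lines that $B$ meets gives the expected intensity of the limit $\tilde{\xi}$, since each thinned process $\xi^{(k)}$ has intensity parameter $x_k$ along $L_k$. This step is essentially a direct computation and should present no real difficulty.

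The substantive work lies in criterion (b), namely $\mu\{\tilde{\xi}_n(B) = 0\} \to \mu\{\tilde{\xi}(B) = 0\}$ for finite unions of semi-closed rectangles $B$. Here is where Proposition \ref{prop.dr_un} does the heavy lifting. The event $\{\tilde{\xi}_n(B) = 0\}$ translates, via the definition of the $\xi^{(k)}_n$ and the indicator functions $1_{\{X_j > u^{(k)}_n\}}$, into a statement that the block maxima $M(nI_j)$ over the relevant horizontal intervals stay below the corresponding thresholds $u^{(j)}_n$. Thus for a box meeting line $L_k$ over interval $(a,b]$, the condition $\tilde{\xi}_n(B)=0$ is exactly $\{M(n(a,b]) \leq u^{(k)}_n\}$. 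For a finite union of rectangles I would express the void probability as an intersection of such block-maxima events over disjoint horizontal intervals, and Proposition \ref{prop.dr_un} yields
$$\mu\left(\bigcap_j \{M(nI_j) \leq u^{(k_j)}_n\}\right) \to \prod_j e^{-x_{k_j}(b_j - a_j)}.$$
I would then match this limit against the void probability of $\tilde{\xi}$: because $\tilde{\xi} = \sum_k \xi^{(k)}$ is built from independent thinnings with the stated intensities, its restriction to the box structure is a Poisson random measure whose void probability is precisely the product of exponentials $e^{-x_{k_j}(b_j-a_j)}$. The nesting relation among the lines must be reconciled with the independence claim in the limit: the thinning parameters $p = x_k/x_{k+1}$ are chosen exactly so that the telescoping intensities on successive lines reproduce the marginal intensities $x_k$, and I would verify that the void-probability computation for $\tilde{\xi}$ respects this hierarchical dependence.

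The main obstacle I anticipate is the bookkeeping in the reduction of $\{\tilde{\xi}_n(B)=0\}$ to a clean intersection of block-maxima events over genuinely disjoint intervals to which Proposition \ref{prop.dr_un} applies. When rectangles in the finite union overlap horizontally but meet different lines, one must carefully identify which threshold $u^{(k)}_n$ controls each sub-interval — and because of the nesting, a void condition on a higher line $L_k$ (larger threshold, fewer exceedances) over a given interval automatically constrains the lower lines over that interval. Sorting out these overlaps so that the limiting product matches the void probability of the hierarchically-thinned limit $\tilde{\xi}$, rather than an independent superposition, is the delicate part; the rest follows from Proposition \ref{prop.dr_un} and the elementary thinning identity for Poisson processes quoted above.
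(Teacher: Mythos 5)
Your overall strategy is the paper's: Kallenberg's two criteria, the direct expectation computation for (a), and for (b) a rearrangement of $B$ into disjoint horizontal intervals followed by an application of Proposition \ref{prop.dr_un}. But the one step that actually carries the proof --- deciding which threshold controls a rectangle that meets several lines --- is stated with the nesting implication inverted, and then left unresolved. Since $u^{(1)}_n \geq \cdots \geq u^{(r)}_n$, an exceedance of a higher threshold is automatically an exceedance of every lower one, so points propagate \emph{downward}: a point of $\xi^{(k)}_n$ at $j/n$ forces a point of $\xi^{(l)}_n$ there for every $l \geq k$. Consequently it is voidness on the \emph{lowest} line meeting a set that forces voidness on all the lines above it; your sentence asserting that a void condition on a \emph{higher} line (larger threshold) constrains the lower lines has the implication backwards --- absence of exceedances of the large threshold says nothing about exceedances of the smaller ones. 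Followed literally, your reduction would attach to each horizontal interval the threshold of the highest intersecting line, yielding a limiting factor $e^{-x_{s}(b_j-a_j)}$ with the smallest intensity, rather than the correct factor $e^{-x_{k_j}(b_j-a_j)}$ where $L_{k_j}$ is the \emph{lowest} line meeting $E_j=(a_j,b_j]\times D_j$. The paper's proof makes exactly this lowest-line reduction: $\{\tilde{\xi}_n(E_j)=0\}=\{\xi^{(k_j)}_n((a_j,b_j])=0\}=\{M(n(a_j,b_j])\le u^{(k_j)}_n\}$, to which Proposition \ref{prop.dr_un} then applies directly.

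The same issue affects your treatment of the limit process: $\tilde{\xi}$ is \emph{not} a Poisson random measure on the plane (the lines are hierarchically dependent through the successive thinnings), so for a rectangle meeting several lines its void probability is not the independent product $e^{-(b-a)\sum_k x_k}$ over the lines met; by the identical nesting of the thinned processes it is again $e^{-x_{k_j}(b-a)}$ for the lowest line alone. Once both reductions are made in the correct (downward) direction, the product over disjoint horizontal intervals equals $\exp\{-\sum_j x_{k_j}(b_j-a_j)\}$ on both sides and the proof closes exactly as in the paper. You rightly flagged this as the delicate point, but as written your directional claim would produce the wrong exponent, so it is a genuine gap rather than mere bookkeeping.
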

\begin{rmk}
For ease of exposition we prove convergence on $(0,1]\times\mathbb{R}$. The method of proof extends to
versions converging on state space $(0,\infty)\times\mathbb{R}$. In this case we require
the $\mathcal{D}_r(u^{(k)}_n)$ and $\mathcal{D}_r'(u^{(k)}_n)$ conditions to hold for sequences of the form
$\{u^{(k)}_n(mx_k)\}$, (for all $m\geq 1$). Here $\{u^{(k)}_n(x_k)\}$ is the sequence defined in \eqref{eq.u-kn}.
\end{rmk}
\begin{proof}[Proof of Proposition~\ref{prop.lines}]
Using the criteria of Kallenberg it is sufficient to check the following:
\begin{enumerate}
\item[(a)] $\E(\tilde{\xi}_n(B))\to \E(\tilde{\xi}(B))$, 
where $B$ is of the form $(a,b]\times(c,d]$,
for $c<d$, $0<a<b$, and $b\leq 1$.
\item[(b)] $\mu\{\tilde{\xi}_n=0\}\to\mu\{\tilde{\xi}=0\}$ 
for all $B=\cup_j (A_j\times C_j)$, where $A_j$, $C_j$
are semi-closed intervals as described in (a) above, and with $A_j\times C_j$ disjoint.
\end{enumerate}
Checking these conditions follows \cite[Section 5.5]{Leadbetter}
as applied to dependent processes. In particular
checking part (a) is straightforward and is done as follows. We suppose that $B:=(a,b]\times(c,d]$ intersects lines $L_s,\ldots, L_t$
for $1\leq s\leq t\leq r$. Then
$\tilde{\xi}_n(B)=\sum_{k=s}^{t}\xi^{(k)}_n((a,b])$ and $\tilde{\xi}(B)=\sum_{k=s}^{t}\xi^{(k)}((a,b])$. The expectation of the latter
is $(b-a)\sum_{k=t}^{s} x_k$. Taking expectations of the former we obtain:
\begin{equation*}
\begin{split}
\E(\tilde{\xi}_n(B)) &=\sum_{k=1}^{r}E\left(\sum_{j=1}^{n}\delta^{(k)}_{j/n}(B)\cdot1_{\{X_j>u^{(k)}_n\}}  \right)
=([nb]-[na])\sum_{k=s}^t\mu\{X_1>u^{(k)}_n\}\\
&\sim n(b-a)\sum_{k=s}^{t}\frac{ x_k}{n}\to \E(\tilde{\xi}(B)).
\end{split}
\end{equation*}
We now check part (b) where Proposition \ref{prop.dr_un} is used.
Write $B=\cup_j (A_j\times C_j)$, with $A_j=(a_j,b_j]$, and $C_j=(c_j,d_j]$. By a rearrangement,
we can express $B$ as $\cup_j (a_j, b_j]\times D_j$, with $(a_j,b_j]$ disjoint, and $D_j$ a finite
union of semi-closed intervals. Hence
$$\{\tilde{\xi}_n(B)=0\}=\bigcap_j\{\tilde{\xi}_n(E_j)=0\},$$
where $E_j=(a_j,b_j]\times D_j$. For each $j$, we take $L_{k_j}$ to be the lowest line intersecting $E_j$. By
the thinning construction of $\tilde{\xi}_n$,
$$\{\tilde{\xi}_n(E_j)=0\}=\{\xi^{(k_j)}_n((a_j,b_j])=0\}.$$
This corresponds to the set $\{M(([a_jn],[b_jn]))\leq u_{k_j}\}.$ We can now 
immediately apply Proposition \ref{prop.dr_un} to conclude that
\begin{equation*}
\mu\{\tilde{\xi}_n(B)=0\}\to\exp\left\{-\sum_j(b_j-a_j) x_{k_j}\right\}=\mu\{\tilde{\xi}(B)=0\}.
\end{equation*} 
\end{proof}

\subsection{Concluding the proof of Theorem \ref{thm.plane}}
Given Proposition \ref{prop.lines}, we now show that the process $\xi_n=\sum_{i\geq 1}\delta_{z(i,n)}$
converges to a Poisson process on the plane with intensity measure $\lambda=\mathrm{Leb}\times\lambda_G$,
with $\lambda_G[a,b]=\log G(b)-\log G(a)$. The argument is purely probabilistic and follows \cite[Section 5.7]{Leadbetter}. 
We give the main steps. It is convenient to work with the process $\hat{\xi}_n=\sum_{i\geq 1}\delta_{w(i,n)}$
with $w(i,n)=\left(\frac{i}{n},u^{-1}_n(X_i)\right)$. Recall that the function $u_n(x)$ is determined 
via the limit relation $n\mu\{X_1>u_n(x)\}\to x$ (from \eqref{eq.u-kn}), and hence if \eqref{eq.un_rep} applies
then $w(i,n)=\left(\frac{i}{n},-\log G(a_n(X_i-b_n))\right)$. We show that $\hat\xi_n$ converges to a Poisson process 
$\hat{\xi}$ in state space $(0,\infty)\times(0,\infty)$, with Lebesgue as the intensity measure. A simple change of measure
argument then shows that $\xi$ is a PRM with the corresponding intensity measure $\lambda$.

Continuing with the proof, and by Kallenberg's criteria it is sufficient to check items 
(a), and (b) as specified in the proof of Proposition \ref{prop.lines} (for a suitable collection of disjoint 
semi-closed rectangles). If $B=(a,b]\times[c, d)$, then
\begin{equation*}
\begin{split}
\E(\hat{\xi}_n(B))&=E\left(\sum_{j\geq 1}\delta_{w(j,n)}(B)\right)
=([nb]-[na])\mu\{c\leq u^{-1}_n(X_1)<d\}\\
&\sim n(b-a)\mu\{u_n(c)\leq X_1\leq u_n(d)\}\sim (b-a)(d-c)=\E(\hat{\xi}(B)).
\end{split}
\end{equation*}
To show (b), we consider the event $\{\hat{\xi}_n(B)=0\}$, where (as before) $B=\cup_j (A_j\times C_j)$, with $A_j=(a_j,b_j]$, $C_j=[c_j,d_j)$, and $A_j\times C_j$ forming a disjoint collection. By a rearrangement,
we can express $B$ as $\cup_j (a_j, b_j]\times D_j$, with $(a_j,d_j]$ disjoint, and $D_j$ a finite
union of semi-closed intervals.  Indeed we can also make the simplifying assumption that $B=\cup_j\left( A\times D_j\right)$ for $A=(a,b]$, since the proof essentially is the same otherwise. 
So writing $B=\bigcup_{j=1}^{m}\left(A\times D_j\right)$, with $D_j=[ x_{2j-1}, x_{2j}]$, for all $j\leq m$, and
some sequence $ x_1<\cdots< x_{2m}$,
\begin{equation}\label{eq.noentry}
\{\hat{\xi}_n(B)=0\}=
\bigcap_{j=1}^{m}\left\{\xi^{(2j-1)}_n(A)=\xi^{(2j)}_n(A)\right\},
\end{equation}
where $\xi^{(j)}_n(A)=\sharp\{\ell\leq n:\,\ell/n\in A,\,u_n(x_j)<X_{\ell}\}$, i.e. $\xi^{(j)}$
counts the number of times of an exceedance $\ell\le n$ of $u_n(x_j)$ where $\ell/n\in A$.
The decomposition in \eqref{eq.noentry} corresponds to the fact that we cannot have any
$X_{\ell}$ with $u_n(x_{2j})<X_{\ell}<u_n(x_{2j-1})$, and therefore an exceedance of $u_n(x_{2j})$ implies
an exceedance of $u_n(x_{2j-1})$.

The processes $\xi^{(j)}_n$ meet the criteria of the thinning processes used in Proposition \ref{prop.lines}, and so:
$$(\xi^{(1)}_n(A),\ldots,\xi^{(2m)}_n(A))\to
(\xi^{(1)}(A),\ldots,\xi^{(2m)}(A)),$$
where convergence is in distribution. The distributions $\xi^{(j)}$ ($1\leq j\leq 2m$) correspond to the successively thinned Poisson processes defined in Proposition \ref{prop.lines}, and we obtain
\begin{equation}\label{eq.reduction}
\mu\{\hat{\xi}_n(B)=0\}\to\mu\left(\bigcap_{j=1}^{m}\{\xi^{(2j-1)}(A)=\xi^{2j}(A)\} \right).
\end{equation}
To compute the right hand side of \eqref{eq.reduction}, we use the thinning properties
of the $\xi^{(j)}$. Note first of all that
\begin{equation*}
\begin{split}
\mu\left(\xi^{(2m-1)}(A)=\xi^{(2m)}(A) \right) &=
\sum_{k=1}^{\infty}\frac{( x_{2m}(b-a))^ke^{- x_{2m}(b-a)}}{k!}\left(\frac{ x_{2m-1}}{ x_{2m}}\right)^k\\
&=e^{-( x_{2m}- x_{2m-1})(b-a)},
\end{split}
\end{equation*}
and by successive thinning we obtain for $j_{m-1}\leq j_m$:
\begin{multline*}
\mu\left\{\left(\xi^{(2m-1)}(A)=\xi^{(2m)}(A)=j_m \right)\cap
\left(\xi^{(2m-3)}(A)=\xi^{(2m-3)}(A)=j_{m-1} \right)\right\}
=\\
\frac{( x_{2m}(b-a))^{j_m}e^{- x_{2m}(b-a)}}{j_m!}
{{j_m}\choose{j_{m-1}}}\left(\frac{ x_{2m-3}}{ x_{2m-1}}\right)^{j_{m-1}}\left(1-\frac{ x_{2m-2}}{ x_{2m-1}} 
\right)^{j_m-j_{m-1}}
\end{multline*}
Summing over all $0\leq j_{m-1}\leq j_{m}<\infty$, we obtain
\begin{multline*}
\mu\left\{\left(\xi^{(2m-1)}(A)=\xi^{(2m)}(A)\right)\cap
\xi^{(2m-3)}(A)=\xi^{(2m-3)}(A)\right)=\\
\exp\{-(b-a)( x_{2m}- x_{2m-1}+ x_{2m-2}- x_{2m-3})\}
\end{multline*}
We can clearly iterate this, and a formula for the general case is given by:
\begin{multline}\label{eq.reduction2}
\mu\left\{\bigcap_{k=1}^{m}\left(\xi^{(2k-1)}(A)=\xi^{(2k)}(A)=j_k \right)\right\}=\\
\frac{( x_{2m-1}(b-a))^{j_m}e^{- x_{2m}(b-a)}}{k!}\prod_{k=2}^{m}
{{j_k}\choose{j_{k-1}}}\left(\frac{ x_{2k-3}}{ x_{2k-1}}\right)^{j_{k-1}}
\left(1-\frac{ x_{2k-2}}{ x_{2k-1}} 
\right)^{j_k-j_{k-1}}
\end{multline} 
The probability of the event in \eqref{eq.reduction} is then obtained by summing together
all probabilities in \eqref{eq.reduction2} with $0\leq j_1\leq j_2\leq\ldots\leq j_m$. By 
an iterative method we obtain
$$\mu\left(\bigcap_{j=1}^{m}\{\xi^{(2j-1)}(A)=\xi^{2j}(A)\} \right)
=\exp\{-\sum_{j=1}^{m}( x_{2j}- x_{2j-1})(b-a)\}=\exp\{-\mathrm{Leb}(B)\}.$$
This completes the proof.

\section{Proof of Proposition \ref{prop.dr_un}}\label{sec.proof.prop.dr_un}
We must first adapt  \cite[Lemma 4.2]{FFT1}:
\begin{lemma}
\begin{enumerate}
\item[(a)] Given sets $A_1, \ldots, A_w\subset [0, \infty)$ and $B_1\supset A_1$, let $\ell:=\#\{j\in \N:j\in B_1\sm A_1\}$ and $(u_i)_{i=1}^w\subset \R$,
\begin{align*}&\left|\mu\left(\bigcap_{i=1}^w\{M(A_i)\le u_i\}\right)-\mu\left( \{M(B_1)\le u_1\}\cap \bigcap_{i=2}^w \{M(A_i)\le u_i\}\right)\right|\\
&\hspace{2cm}\le \mu(M(A_1)\le u_1)-\mu(M(B_1)\le u_1)\le\ell \mu(X>u_1),
\end{align*}

\item[(b)] For $w\in \N$, assuming $\min\{x:x\in A_i, i=1, \ldots, w\}\ge r+t$, for $A_0=[0, r+t)$, for $u'>0$,
\begin{align*}&\Bigg|\mu\left(\{M(A_0)\le u'\}\cap\bigcap_{j=1}^w\{M(A_j)\le u_j\}\right)-\mu\left(\bigcap_{j=1}^w \{M(A_j)\le u_j\}\right)\\
&\hspace{6cm}+ \sum_{i=0}^{r-1}\mu\left(\{X>u'\}\cap\bigcap_{j=1}^w\{M(A_j-i)\le u_j\}\right)\Bigg|\\
&\hspace{5cm}\le 2r\sum_{i=1}^{r-1}\mu\left(\{X>u'\}\cap\{X_i>u'\}\right)+t\mu(X>u').
\end{align*}
\end{enumerate}
\label{lem:4.2}
\end{lemma}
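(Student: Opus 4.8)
The plan is to prove both parts by elementary set manipulations (monotonicity, the union bound, and the Bonferroni inequality) combined only with the stationarity of $\{X_i\}$; no mixing input is needed at this stage, since conditions $\mathcal{D}_r(u_n^{(k)})$ and $\mathcal{D}_r'(u_n^{(k)})$ enter only later when this lemma is fed into the proof of Proposition~\ref{prop.dr_un}. For part (a), the starting observation is that $B_1\supset A_1$ forces the inclusion $\{M(B_1)\le u_1\}\subseteq\{M(A_1)\le u_1\}$, because the constraint $X_k\le u_1$ is imposed over the larger integer set $B_1\cap\N$. Writing $\Gamma_a:=\bigcap_{i=2}^{w}\{M(A_i)\le u_i\}$, the two events inside the absolute value share the factor $\Gamma_a$ and differ only in their first factor, one contained in the other; hence the difference is nonnegative and equals $\mu\big((\{M(A_1)\le u_1\}\sm\{M(B_1)\le u_1\})\cap\Gamma_a\big)$, which after discarding $\Gamma_a$ is at most $\mu(M(A_1)\le u_1)-\mu(M(B_1)\le u_1)$. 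For the final inequality I would note that on the symmetric-difference event one necessarily has $M(B_1\sm A_1)>u_1$, so it lies in $\bigcup_{k\in(B_1\sm A_1)\cap\N}\{X_k>u_1\}$; a union bound over these $\ell$ indices together with stationarity ($\mu(X_k>u_1)=\mu(X>u_1)$) yields the factor $\ell\,\mu(X>u_1)$.

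For part (b) I would first rewrite the target. Setting $\Gamma:=\bigcap_{j=1}^{w}\{M(A_j)\le u_j\}$, the first two terms combine to $-\mu(\{M(A_0)>u'\}\cap\Gamma)$, so the claim reduces to estimating the difference between $\mu(\{M(A_0)>u'\}\cap\Gamma)$ and $\sum_{i=0}^{r-1}\mu\big(\{X>u'\}\cap\bigcap_{j=1}^{w}\{M(A_j-i)\le u_j\}\big)$. The crucial step is a stationarity shift: translating all indices up by $i$ gives
$$\mu\left(\{X>u'\}\cap\bigcap_{j=1}^{w}\{M(A_j-i)\le u_j\}\right)=\mu\left(\{X_i>u'\}\cap\Gamma\right),$$
and this move is legitimate precisely because $\min_j A_j\ge r+t$ with $i\le r-1$ keeps each $A_j-i\subset[t+1,\infty)$, strictly positive and disjoint from the index $0$ carrying $\{X>u'\}$. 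Thus the sum in the statement is exactly $\sum_{i=0}^{r-1}\mu(\{X_i>u'\}\cap\Gamma)$.

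It then remains to bound $\big|\mu(\{M(A_0)>u'\}\cap\Gamma)-\sum_{i=0}^{r-1}\mu(\{X_i>u'\}\cap\Gamma)\big|$. Writing $\{M(A_0)>u'\}=\bigcup_{i=0}^{r+t-1}\{X_i>u'\}$, I would split this union into a tracking block $\bigcup_{i=0}^{r-1}$ and a buffer block $\bigcup_{i=r}^{r+t-1}$. Discarding the buffer costs at most $\mu\big(\bigcup_{i=r}^{r+t-1}\{X_i>u'\}\big)\le t\,\mu(X>u')$ by subadditivity and stationarity, producing the $t\mu(X>u')$ term. For the tracking block the union bound gives the upper estimate $\mu\big(\bigcup_{i=0}^{r-1}\{X_i>u'\}\cap\Gamma\big)\le\sum_{i=0}^{r-1}\mu(\{X_i>u'\}\cap\Gamma)$, while the Bonferroni inequality gives the matching lower estimate with deficit at most $\sum_{0\le i<i'\le r-1}\mu(\{X_i>u'\}\cap\{X_{i'}>u'\})$. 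Finally I would organise this double sum by the gap $d=i'-i$: stationarity makes each term $\mu(\{X>u'\}\cap\{X_d>u'\})$, and the number of admissible pairs at a given gap is at most $r$, giving $r\sum_{d=1}^{r-1}\mu(\{X>u'\}\cap\{X_d>u'\})$, comfortably inside the stated bound $2r\sum_{i=1}^{r-1}\mu(\{X>u'\}\cap\{X_i>u'\})$.

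The main obstacle here is bookkeeping rather than any deep idea: one must combine the one-sided union bound (which overcounts) with the one-sided Bonferroni bound (which undercounts) to manufacture a genuinely \emph{two-sided} error, and keep every index shift consistent so that each appeal to stationarity is applied to a uniform translation of the underlying index set. The disjointness hypothesis $\min_j A_j\ge r+t$ is exactly what makes the buffer/tracking split clean and the shifted sets $A_j-i$ stay clear of the index carrying the exceedance, so that the term-by-term rewriting above is valid; I expect the remaining estimates to be routine given these observations.
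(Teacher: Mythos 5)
Your proof is correct, and it is worth noting that the paper itself does not give an argument at all: its ``proof'' of Lemma~\ref{lem:4.2} consists of citing (4.1) and (4.2) of \cite[Lemma 4.2]{FFT1}, the latter in turn a minor adaptation of \cite[Lemma 3.2]{FF}. Your part (a) is exactly the argument behind (4.1): monotonicity $\{M(B_1)\le u_1\}\subseteq\{M(A_1)\le u_1\}$, dropping the common factor $\bigcap_{i=2}^w\{M(A_i)\le u_i\}$, and a union bound over the $\ell$ integers of $B_1\setminus A_1$. In part (b) you deviate slightly from the cited proofs: after the (correctly justified) stationarity shift turning $\mu\bigl(\{X>u'\}\cap\bigcap_j\{M(A_j-i)\le u_j\}\bigr)$ into $\mu(\{X_i>u'\}\cap\Gamma)$ --- which is where the hypothesis $\min_j A_j\ge r+t$ is used, just as you say --- the FF/FFT1 route disjointifies $\{M(A_0)>u'\}$ by the first exceedance and estimates the cost of removing the ``first-exceedance'' constraint term by term, each removal costing pairwise joint exceedances, which is how the factor $2r$ arises. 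You instead combine the one-sided union bound (error $\le t\,\mu(X>u')$ from the buffer block) with the one-sided Bonferroni inequality (error $\le\sum_{0\le i<i'\le r-1}\mu(\{X_i>u'\}\cap\{X_{i'}>u'\})$), and after grouping by the gap $d=i'-i$ (at most $r-d\le r$ pairs per gap, each of measure $\mu(\{X>u'\}\cap\{X_d>u'\})$ by stationarity) you even get the sharper constant $r$ in place of $2r$, so your bound implies the stated one. The only blemishes are cosmetic index conventions ($X_0$ versus $X_1$ as the distinguished exceedance, and whether the shift produces $\{X_i>u'\}$ or $\{X_{1+i}>u'\}$), which the paper itself treats loosely; your bookkeeping is internally consistent, so this does not affect correctness.
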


\begin{proof}
For the first part, the first equality is an elementary argument, while the final inequality follows from (4.1) in \cite[Lemma 4.2]{FFT1}.

The second part follows as in (4.2) of \cite[Lemma 4.2]{FFT1}, itself a minor adaptation of \cite[Lemma 3.2]{FF}.
\end{proof}

\begin{proof}[Proof of Proposition~\ref{prop.dr_un}]
We closely follow the proof of \cite[Proposition 1]{FFT1}.

Let $h:=\inf_{j\in \{1,\ldots,p\}}\{b_j-a_j\}$ and
$H:=\lceil\sup\{x:x\in A\}\rceil$. Take $k>2/h$ and $n$ sufficiently
large. Note this guarantees that if we partition $n[0,H]\cap {\mathbb
Z}$ into blocks of length $r_n:=\lfloor n/k\rfloor$,
$J_1=[Hn-r_n,Hn)$, $J_2=[Hn-2r_n,Hn-r_n)$,\ldots,
$J_{Hk}=[Hn-Hkr_n,n-(Hk-1)r_n)$, $J_{Hk+1}=[0,Hn-Hkr_n)$, then there
is at least one of these blocks contained in $\n I_i$.  Let
$S_\ell=S_\ell(k)$ be the number of blocks $J_j$ contained in $\n
I_\ell$ minus 1, that is,
$$S_\ell:=\#\{j\in \{1,\ldots,Hk\}:J_j\subset \n I_\ell\}-1.$$
So $S_\ell\ge0$ $\forall \ell\in
\{1,\ldots,p\}$. 
Set $i_\ell:=\min\{j\in \{1,\ldots,k\}:J_j\subset \n I_{\ell}\}.$
Then $J_{i_\ell},J_{i_\ell+1},\ldots,J_{i_\ell+S_\ell}\subset n
I_\ell$. Now, fix $\ell$ and for each $ i\in \{i_{p-\ell
+1},\ldots,i_{p-\ell +1}+S_{p-\ell+1}\}$ let
$$
B_i:=\bigcup_{j=i_{p-\ell+1}}^i J_j,\;  J_i^*:=[Hn-ir_n,
Hn-(i-1)r_n-t_n)\; \mbox{ and } J_i':=J_i-J_i^*$$ for $t_n=o(n)$ given in $\tDr(u_n)$. Note that
$|J_i^*|=r_n-t_n$ and $|J_i'|=t_n$.  
See Figure~\ref{fig:notation} for
more of an idea of the notation here. 
\begin{figure}[h]
  \includegraphics[scale=1]{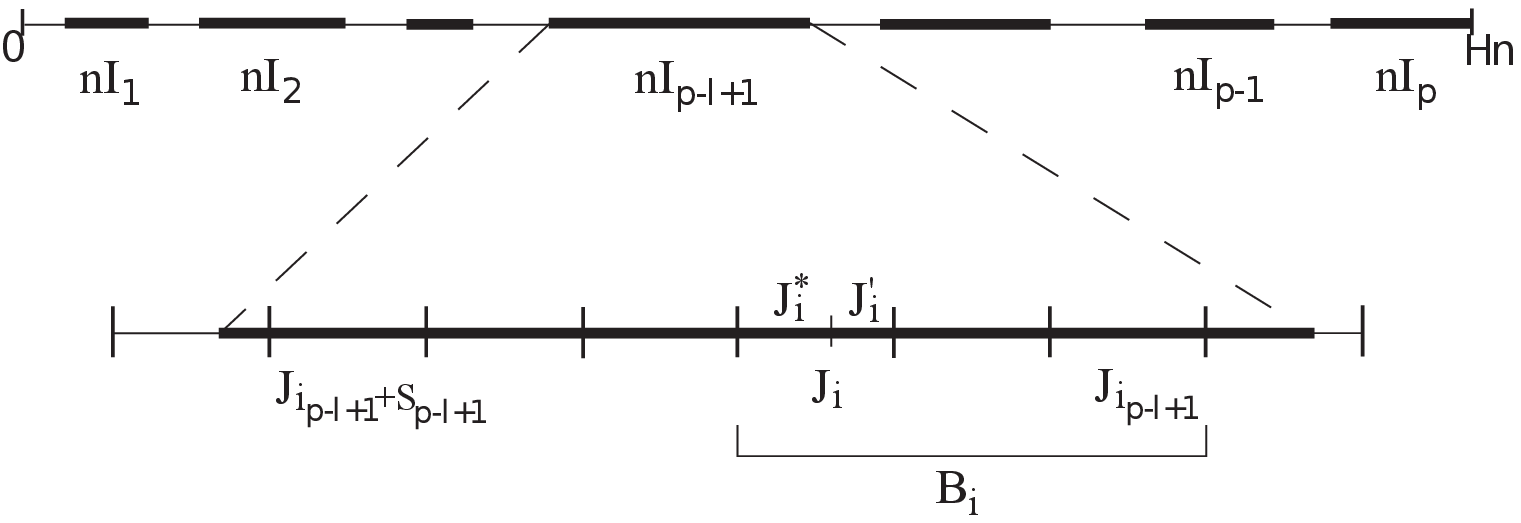}
   \caption{Notation}\label{fig:notation.pdf}
\end{figure}\label{fig:notation}

For the first part of the proof, we write $u_{n}^{(j)}=u_j$.
Then
 for any $u'\in \R$ 

\begin{align*}
&\Bigg|\mu\left(\{M(B_i)\le u'\}\cap\left(\bigcap_{i=1}^{\ell-1} \{M(nI_{p-i+1})\le u_{p-i+1}\}\right) \right)\\
&\hspace{3cm}-(1-r_n\mu(X>u'))\mu\left(\{M(B_{i-1})\le u'\}\cap\left(\bigcap_{i=1}^{\ell-1} \{M(nI_{p-i+1})\le u_{p-i+1}\}\right) \right)\Bigg|\\
&\le \Bigg|\mu\left(\{M(B_i)\le u'\}\cap\left(\bigcap_{i=1}^{\ell-1} \{M(nI_{p-i+1})\le u_{p-i+1}\}\right) \right)\\
&\hspace{2cm}-\mu\left(\{M(B_{i-1})\le u'\}\cap\left(\bigcap_{i=1}^{\ell-1} \{M(nI_{p-i+1})\le u_{p-i+1}\}\right) \right)\\
&\hspace{4cm}+r_n\mu(X>u')\mu\left(\{M(B_{i-1})\le u'\}\cap\left(\bigcap_{i=1}^{\ell-1} \{M(nI_{p-i+1})\le u_{p-i+1}\}\right) \right)\Bigg|\\
&\le \Bigg|\mu\left(\{M(B_i)\le u'\}\cap\left(\bigcap_{i=1}^{\ell-1} \{M(nI_{p-i+1})\le u_{p-i+1}\}\right) \right)\\
&\hspace{1cm}-\mu\left(\{M(B_{i-1})\le u'\}\cap\left(\bigcap_{i=1}^{\ell-1} \{M(nI_{p-i+1})\le u_{p-i+1}\}\right) \right)\\
&\hspace{1cm}+\sum_{j=0}^{r_n-t_n-1}\mu\left(\{X_{j+Hn-ir_n}>u'\}\cap\{M(B_{i-1})\le u'\}\cap \left(\bigcap_{i=1}^{\ell-1} \{M(nI_{p-i+1})\le u_{p-i+1}\}\right)\right)\Bigg|\\
&
\quad+\Bigg|(r_n-t_n)\mu(X>u')\mu\left(\{M(B_{i-1})\le u'\}\cap\left(\bigcap_{i=1}^{\ell-1} \{M(nI_{p-i+1})\le u_{p-i+1}\}\right) \right)\\
&\hspace{1.5cm} -\sum_{j=0}^{r_n-t_n-1}\mu\left(\{X_{j+Hn-ir_n}>u'\}\cap \{M(B_{i-1})\le u'\}\cap \left(\bigcap_{i=1}^{\ell-1} \{M(nI_{p-i+1})\le u_{p-i+1}\}\right)\right)\Bigg|\\
&\hspace{8cm}+t_n\mu(X>u').\\
\end{align*}

We next apply Lemma~\ref{lem:4.2}(b) to the first sum in absolute value above. 
Since $B_i=J_i\cup B_{i-1}= J_i^*\cup J_i'\cup B_{i-1}$, we can translate the sets in the first two terms here  back by $Hn-ir_n$ and with $J_i-(Hn-ir_n)=[0, r_n)$ taking the place of $A_0$ in our lemma, and correspondingly shifting the terms in the sum there by $d_j=(j+Hn-ir_n)$.  Then (also shifting terms in the second absolute value by $d_j$ and using the invariance of $\mu$),

\begin{align*}
&\Bigg|\mu\left(\{M(B_i)\le u'\}\cap\left(\bigcap_{i=1}^{\ell-1} \{M(nI_{p-i+1})\le u_{p-i+1}\}\right) \right)\\
&\hspace{2cm}-(1-r_n\mu(X>u'))\mu\left(\{M(B_{i-1})\le u'\}\cap\left(\bigcap_{i=1}^{\ell-1} \{M(nI_{p-i+1})\le u_{p-i+1}\}\right) \right)\Bigg|\\
&\hspace{5mm}\le 2(r_n-t_n)\sum_{j=1}^{r_n-t_n-1}\mu(\{X>u'\}\cap\{X_j>u'\})
+t_n\mu(X>u')\\
&\hspace{15mm}+\sum_{j=0}^{r_n-t_n-1}\Bigg|\mu(X>u')\mu\left(\{M(B_{i-1})\le u'\}\cap\left(\bigcap_{i=1}^{\ell-1} \{M(nI_{p-i+1})\le u_{p-i+1}\}\right) \right)\\
&\hspace{2cm} -\mu\left(\{X>u'\}\cap\{M(B_{i-1}-d_j)\le u'\}\cap \left(\bigcap_{i=1}^{\ell-1} \{M(nI_{p-i+1}-d_j)\le u_{p-i+1}\}\right)\right)\Bigg|\\
&\hspace{10cm}+t_n\mu(X>u').
\end{align*}
 Now using condition $\tDr(u_n)$, we obtain

\begin{align*}
&\Bigg|\mu\left(\{M(B_i)\le u'\}\cap\left(\bigcap_{i=1}^{\ell-1} \{M(nI_{p-i+1})\le u_{p-i+1}\}\right) \right)\\
&\hspace{3cm}-(1-r_n\mu(X>u'))\mu\left(\{M(B_{i-1})\le u'\}\cap\left(\bigcap_{i=1}^{\ell-1} \{M(nI_{p-i+1})\le u_{p-i+1}\}\right) \right)\Bigg|\\
& \leq
2(r_n-t_n)\sum_{j=1}^{r_n-t_n-1}\mu(\{X>u'\}\cap\{X_j>u'\})
+2t_n\mu(X>u')+(r_n-t_n)\gamma(n,t_n).
\end{align*}
Set
$$\Upsilon_{k,n}(u'):=2(r_n-t_n)\sum_{j=1}^{r_n-t_n-1}
\mu(\{X>u'\}\cap\{X_j>u'\})
+2t_n\mu(X>u')+(r_n-t_n)\gamma(n,t_n).$$ By the definition of $u_j=u_{n, j}$, we
may assume that $n$ and $k$ are sufficiently large so that
$\frac{n}{k}\mu(X>u_j)<2$ and $|1-r_n\mu(X>u_j)|<1$ which implies
\begin{align*}
&\Bigg|\mu\left(\{M(B_{S_{p-\ell+1}})\le u_{p-\ell+1}\}\cap\left(\bigcap_{i=1}^{\ell-1} \{M(nI_{p-i+1})\le u_{p-i+1}\}\right) \right)\\
&\hspace{0.5cm}-(1-r_n\mu(X>u_{p-\ell+1}))\mu\left(\{M(B_{S_{p-\ell+1}-1})\le u_{p-\ell+1}\}\cap\left(\bigcap_{i=1}^{\ell-1} \{M(nI_{p-i+1})\le u_{p-i+1}\}\right) \right)\Bigg|\\
&\qquad\le\Upsilon_{k,n}(u_{p-\ell+1}),
\end{align*}
and

\begin{align*}
&\Bigg|\mu\left(\{M(B_{S_{p-\ell+1}})\le u_{p-\ell+1}\}\cap\left(\bigcap_{i=1}^{\ell-1} \{M(nI_{p-i+1})\le u_{p-i+1}\}\right) \right)\\
&\hspace{0.3cm}-(1-r_n\mu(X>u_{p-\ell+1}))^2\mu\left(\{M(B_{S_{p-\ell+1}-2})\le u_{p-\ell+1}\}\cap\left(\bigcap_{i=1}^{\ell-1} \{M(nI_{p-i+1})\le u_{p-i+1}\}\right) \right)\Bigg|\\
&\le \Bigg|\mu\left(\{M(B_{S_{p-\ell+1}})\le u_{p-\ell+1}\}\cap\left(\bigcap_{i=1}^{\ell-1} \{M(nI_{p-i+1})\le u_{p-i+1}\}\right) \right)\\
&\hspace{0.5cm}-(1-r_n\mu(X>u_{p-\ell+1}))\mu\left(\{M(B_{S_{p-\ell+1}-1})\le u_{p-\ell+1}\}\cap\left(\bigcap_{i=1}^{\ell-1} \{M(nI_{p-i+1})\le u_{p-i+1}\}\right) \right)\Bigg|\\
&\quad+|1-r_n\mu(X>u_{p-\ell+1})|\Bigg|\mu\left(\{M(B_{S_{p-\ell+1}-1})\le u_{p-\ell+1}\}\cap\left(\bigcap_{i=1}^{\ell-1} \{M(nI_{p-i+1})\le u_{p-i+1}\}\right) \right)\\
&\hspace{0.7cm}-(1-r_n\mu(X>u_{p-\ell+1}))\mu\left(\{M(B_{S_{p-\ell+1}-2})\le u_{p-\ell+1}\}\cap\left(\bigcap_{i=1}^{\ell-1} \{M(nI_{p-i+1})\le u_{p-i+1}\}\right) \right)\Bigg|\\
&\le 2\Upsilon_{k,n}(u_{p-\ell+1}),
\end{align*}

Inductively, we obtain
\begin{align*}
&\Bigg|\mu\left(\{M(B_{S_{p-\ell+1}})\le u_{p-\ell+1}\}\cap\left(\bigcap_{i=1}^{\ell-1} \{M(nI_{p-i+1})\le u_{p-i+1}\}\right) \right)\\
&\hspace{1cm}-(1-r_n\mu(X>u_{p-\ell+1}))^{S_{p-\ell+1}}\mu\left(\bigcap_{i=1}^{\ell-1} \{M(nI_{p-i+1})\le u_{p-i+1}\}\right)\Bigg|\\
&\qquad\le S_{p-\ell+1}\Upsilon_{k,n}(u_{p-\ell+1}).
\end{align*}
Using Lemma~\ref{lem:4.2}(a),
\begin{align*}
&\Bigg|\mu\left(\bigcap_{i=1}^{\ell} \{M(nI_{p-i+1})\le u_{p-i+1}\}\right)\\
&\hspace{1cm}-(1-r_n\mu(X>u_{p-\ell+1}))^{S_{p-\ell+1}}\mu\left(\bigcap_{i=1}^{\ell-1} \{M(nI_{p-i+1})\le u_{p-i+1}\}\right)\Bigg|\\
&\le\Bigg|\mu\left(\bigcap_{i=1}^{\ell} \{M(nI_{p-i+1})\le u_{p-i+1}\}\right) \\
&\hspace{2cm}-\mu\left(\{M(B_{S_{p-\ell+1}})\le u_{p-\ell+1}\}\cap\left(\bigcap_{i=1}^{\ell-1} \{M(nI_{p-i+1})\le u_{p-i+1}\}\right) \right)\Bigg|\\
&\quad+\Bigg|\mu\left(\{M(B_{S_{p-\ell+1}})\le u_{p-\ell+1}\}\cap\left(\bigcap_{i=1}^{\ell-1} \{M(nI_{p-i+1})\le u_{p-i+1}\}\right) \right)\\
&\hspace{1cm}-(1-r_n\mu(X>u_{p-\ell+1}))^{S_{p-\ell+1}}\mu\left(\bigcap_{i=1}^{\ell-1} \{M(nI_{p-i+1})\le u_{p-i+1}\}\right)\Bigg|\\
&\le\Bigg|\mu\left(\bigcap_{i=1}^{\ell} \{M(nI_{p-i+1})\le u_{p-i+1}\}\right) \\
&\hspace{2cm}-\mu\left(\left\{M\left(\cup_{i=i_\ell}^{S_{p-\ell+1}} J_i\right)\le u_{p-\ell+1}\right\}\cap\left(\bigcap_{i=1}^{\ell-1} \{M(nI_{p-i+1})\le u_{p-i+1}\}\right) \right)\Bigg|\\
&\hspace{8cm}+ S_{p-\ell+1}\Upsilon_{k,n}(u_{p-\ell+1})\\
&
\le 2r_n
\mu(X>u_{p-\ell+1})+S_{p-\ell+1}\Upsilon_{k,n}(u_{p-\ell+1}).
\end{align*}
For the next step we estimate 
\begin{align*}
&\Bigg|\mu\left(\bigcap_{i=1}^{\ell} \{M(nI_{p-i+1})\le u_{p-i+1}\}\right) \\
&\hspace{0.3cm}-(1-r_n\mu(X>u_{p-\ell+1}))^{S_{p-\ell+1}}(1-r_n\mu(X>u_{p-\ell+2}))^{S_{p-\ell+2}}\mu\left(\bigcap_{i=1}^{\ell-2} \{M(nI_{p-i+1})\le u_{p-i+1}\}\right)\Bigg|\\
&\le \Bigg|\mu\left(\bigcap_{i=1}^{\ell} \{M(nI_{p-i+1})\le u_{p-i+1}\}\right) \\
&\hspace{3cm}-(1-r_n\mu(X>u_{p-\ell+1}))^{S_{p-\ell+1}}\mu\left(\bigcap_{i=1}^{\ell-1} \{M(nI_{p-i+1})\le u_{p-i+1}\}\right)\Bigg|\\
&\hspace{0.4cm}+|1-r_n\mu(X>u_{p-\ell+2})|^{S_{p-\ell+2}}\Bigg|\mu\left(\bigcap_{i=1}^{\ell-1} \{M(nI_{p-i+1})\le u_{p-i+1}\}\right) \\
&\hspace{3cm}-(1-r_n\mu(X>u_{p-\ell+1}))^{S_{p-\ell+2}}\mu\left(\bigcap_{i=1}^{\ell-2} \{M(nI_{p-i+1})\le u_{p-i+1}\}\right)\Bigg|\\
 &\le 2r_n\left(\mu(X>u_{p-\ell+1})+\mu(X>u_{p-\ell+2})\right)+S_{p-\ell+1}\Upsilon_{k,n}(u_{p-\ell+1})+S_{p-\ell+2}\Upsilon_{k,n}(u_{p-\ell+2})
\end{align*}
Therefore, by induction, we obtain
\begin{align*}
&\Bigg|\mu\left(\bigcap_{i=1}^{\ell} \{M(nI_{p-i+1})\le u_{p-i+1}\}\right) -\prod_{j=1}^p\left(1-r_n\mu(X>u_{j})\right)^{S_{j}}\Bigg|\\
&\hspace{7cm}\le 2r_n\sum_{j=1}^p\mu(X>u_j)+\sum_{j=1}^pS_j\Upsilon_{k,n}(u_j).
\end{align*}
Now, it is easy to see that $S_j\sim k|I_j|$, for each
$j\in\{1,\ldots,p\}$. Consequently, recalling $u_{n, j}=u_j$,
\begin{align*}
&\lim_{k\rightarrow+\infty}\lim_{n\rightarrow+\infty}
\prod_{j=1}^p\left(1-r_n\mu(X>u_{j})\right)^{S_{j}}= \lim_{k\rightarrow+\infty}\lim_{n\rightarrow+\infty}
\prod_{j=1}^p\left(1-\left\lfloor\frac{n}{k}\right\rfloor\mu(X>u_{j})\right)^{S_{j}}\\
&= \lim_{k\rightarrow+\infty}\lim_{n\rightarrow+\infty}
\prod_{j=1}^p\left(1-\left\lfloor\frac{n}{k}\right\rfloor\mu(X>u_{j})\right)^{k|I_{j}|}= \lim_{k\rightarrow+\infty}\lim_{n\rightarrow+\infty}
\prod_{j=1}^p\left(1-\frac{ x_j}{k}\right)^{k|I_{j}|}=
\prod_{j=1}^p e^{- x_j|I_{j}|}\\
\end{align*}

To conclude the proof it suffices to show that
\[
\lim_{k\rightarrow+\infty}\lim_{n\rightarrow+\infty}\left(2r_n\sum_{j=1}^p\mu(X>u_{n,j})+\sum_{j=1}^pS_j\Upsilon_{k,n}(u_{n,j})\right)=0.
\]
We start by noting that, since $n\mu(X>u_{n,j})\to x_j\geq 0$,
\[
\lim_{k\rightarrow+\infty}\lim_{n\rightarrow+\infty}2r_n \mu(X>u_{n,j})=
\lim_{k\rightarrow+\infty} \frac{2 x_j}{k}=0.
\]
Next we need to check that for each $j=1,\ldots, p$
\begin{multline*}
\lim_{k\rightarrow+\infty}\lim_{n\rightarrow+\infty}
2k(r_n-t_n)\sum_{j=1}^{r_n-t_n-1}
\mu(\{X>u_{n,j}\}\cap\{X_j>u_{n, j}\})
+2t_n\mu(X>u_{n, j}) \\+k(r_n-t_n)\gamma(n,t_n)=0.
\end{multline*}

Recall that $t_n=o(n)$ is given by
$\mathcal{D}_p(u_n^{(k)})$. Now, observe that for each $j=1, \ldots, p$ and every $k\in\N$,
we have \( \lim_{n\to\infty}kt_n\mu(X>u_{n}^{(j)})=0 \). Finally, we use
$\mathcal{D}_p(u_n^{(k)})$ and $\mathcal{D}_p'(u_n^{(k)})$ to prove that the two remaining terms also
go to $0$.
\end{proof}

\section*{Acknowledgements}
The authors wish to thank J. Freitas, M. Nicol and D. Terhesiu for 
useful comments and discussions. This research was partially supported by the London Mathematics Society 
(Scheme 4, no. 41126), and both authors thank the Erwin Schr\"odigner Institute (ESI) in Vienna were part of 
this work was carried out. MH wishes to thank the Department of Mathematics, 
University of Houston for hospitality and financial support, and MT thanks Exeter University for their 
hospitality and support.

\end{document}